\documentclass[11pt]{amsart}

\usepackage[a4paper,margin=1in]{geometry}
\usepackage{amsmath,amssymb,amsthm,mathrsfs}
\usepackage{hyperref}
\usepackage{enumitem}
\usepackage{mathtools}
\usepackage{bm}
\usepackage{tikz}
\usepackage{comment}

\hypersetup{
	colorlinks=true,
	linkcolor=blue,
	citecolor=blue,
	urlcolor=blue
}

\newtheorem{theorem}{Theorem}[section]
\newtheorem{lemma}[theorem]{Lemma}
\newtheorem{proposition}[theorem]{Proposition}

\theoremstyle{remark}
\newtheorem{remark}[theorem]{Remark}

\newcommand{\mn}{\medskip\noindent}

\numberwithin{equation}{section}

\title{
	A Local--to--Global Propagation Principle
	for Dirichlet--to--Neumann Maps
}

\author[T. Daud\'e]{Thierry Daud\'e}

\address{
	Universit\'e Marie et Louis Pasteur,
	CNRS, LmB (UMR 6623),
	F-25000 Besan\c{c}on, France
}

\email{thierry.daude@univ-fcomte.fr}

\author[A. Enciso]{Alberto Enciso}

\address{
	Instituto de Ciencias Matem\'aticas,
	Consejo Superior de Investigaciones Cient\'ificas,
	C/ Nicol\'as Cabrera 13--15,
	28049 Madrid, Spain
}

\email{aenciso@icmat.es}

\author[B. Helffer]{Bernard Helffer}

\address{
	Laboratoire de Math\'ematiques Jean Leray,
	Nantes Universit\'e,
	2 rue de la Houssini\`ere,
	BP 92208,
	F-44322 Nantes Cedex 03, France
}

\email{Bernard.Helffer@univ-nantes.fr}

\author[N. Kamran]{Niky Kamran}

\address{
	Department of Mathematics and Statistics,
	McGill University,
	805 Sherbrooke Street West,
	Montr\'eal, QC, H3A 0B9, Canada
}

\email{niky.kamran@mcgill.ca}

\author[F. Nicoleau]{Fran\c{c}ois Nicoleau}

\address{
	Laboratoire de Math\'ematiques Jean Leray,
	Nantes Universit\'e,
	2 rue de la Houssini\`ere,
	BP 92208,
	F-44322 Nantes Cedex 03, France
}

\email{francois.nicoleau@univ-nantes.fr}

\date{}

\begin{document}
	
\maketitle

\begin{abstract}
	
	We establish four local-to-global propagation results for
	Dirichlet--to--Neumann maps. Our first two results are proved in the general setting of smooth
	compact Riemannian manifolds with boundary. The first shows that if two
	smooth Riemannian metrics coincide in a collar neighborhood of a
	connected boundary component \(\Gamma\), then equality of the
	corresponding local Dirichlet--to--Neumann maps on a nonempty open
	subset of \(\Gamma\) propagates to equality of the associated global
	Dirichlet--to--Neumann maps on all of \(\Gamma\). The proof combines
	unique continuation and self-adjointness arguments. The second replaces
	the geometric collar assumption by an exponential spectral assumption
	on the difference of the corresponding global
	Dirichlet--to--Neumann maps. The proof relies on the spectral unique
	continuation theory of Jerison--Lebeau, through the formulation of
	Le~Rousseau--Lebeau. Our third and fourth results establish local-to-global propagation
	principles under Ingham-type quasi--analytic spectral assumptions.
	Assuming that the boundary manifold is respectively a compact
	Riemannian symmetric space or a compact quasi--analytic Riemannian
	manifold, they rely on the propagation theorems of Ganguly--Thangavelu and of Bhowmik--Pradhan. As an application, we consider a class of conformally warped product
	metrics. In this setting, the local Borg--Marchenko theorem and
	Weyl--Titchmarsh theory relate the required Ingham-type spectral decay
	to a suitable quasi--analytic boundary closeness of the conformal
	factors, yielding new local-to-global uniqueness results for
	Dirichlet--to--Neumann maps.

\end{abstract}

	\tableofcontents


\section{Introduction}\label{Intro}

Since Calder\'on's pioneering work \cite{Calderon}, inverse boundary value problems have generated an extensive literature. One of the central questions is whether the coefficients of an elliptic equation, or the underlying geometry of a manifold, can be recovered from boundary measurements encoded by the Dirichlet--to--Neumann (DN) map.

\medskip\noindent
The case of partial boundary measurements has attracted particular attention during the last two decades. For the conductivity and Schrödinger equations in Euclidean domains, important uniqueness results with partial data were obtained by Bukhgeim--Uhlmann \cite{BukhgeimUhlmann}, Kenig--Sjöstrand--Uhlmann \cite{KSU07}, Isakov \cite{Isakov}, Imanuvilov--Uhlmann--Yamamoto \cite{ImanuvilovUhlmannYamamoto}, and very recently Uhlmann--Wang \cite{UhlmannWang}. Similar questions have also been investigated on Riemannian manifolds, notably by Kenig--Salo \cite{KenigSalo}. We also refer to the surveys \cite{KenigSaloSurvey,UhlmannSurvey} for a broader overview of the subject.

\medskip\noindent
The philosophy of the partial data Calder\'on problem is that measurements performed on a suitable open subset of the boundary should determine global information inside the manifold. In the present paper, we investigate a different but related question.
Instead of asking whether local measurements determine the metric, we ask
whether equality of local Dirichlet--to--Neumann maps forces equality of
the corresponding global Dirichlet--to--Neumann maps, thereby reducing the
inverse problem to a setting where stronger uniqueness results are
available.

\medskip\noindent
To investigate this issue, we first introduce the global Dirichlet--to--Neumann map associated with a smooth metric \(g\).
Let \(M\) be a smooth compact orientable connected manifold with smooth boundary.  Let \(u\) be the solution of
\begin{equation}
	\label{eq:laplace-dirichlet}
	-\Delta_g u=0
	\quad \text{on } M\,,
\end{equation}
with boundary condition
\[
u|_{\partial M}=\psi\,,
\qquad
\psi\in C^\infty(\partial M)\,.
\]
The corresponding global Dirichlet--to--Neumann map is defined by
\begin{equation}
	\label{eq:global-DN-map}
	\Lambda_g\psi
	=
	\partial_\nu u\big|_{\partial M}\,,
\end{equation}
where \(\nu\) denotes the outward unit normal vector field along \(\partial M\).

\medskip\noindent
Given a nonempty open subset
\(
\mathcal O\subset \partial M\,,
\)
we define the local Dirichlet--to--Neumann map as follows.
For \(\psi\in C_c^\infty(\mathcal O)\), let \(u\) be the solution of
\eqref{eq:laplace-dirichlet} with boundary condition
\[
u=\psi
\quad\text{on }\mathcal O\,,
\qquad
u=0
\quad\text{on }\partial M\setminus\mathcal O\,.
\]
The local Dirichlet--to--Neumann map is then given by
\begin{equation}
	\label{eq:local-DN-map}
	\Lambda_{g,\mathcal O}\,\psi
	=
	\partial_\nu u\big|_{\mathcal O}\,.
\end{equation}

\medskip\noindent
We first state a geometric local-to-global result in which the boundary may have several connected components. The point is that the propagation takes place along one fixed connected boundary component: equality of the local Dirichlet--to--Neumann maps on an arbitrary nonempty open subset of this component extends to the whole component, provided the two metrics agree in a collar neighborhood of it.

\begin{theorem}[Geometric local-to-global propagation]
	\label{firstmainresult}
	Let \(g\) and \(\widetilde g\) be smooth Riemannian metrics on \(M\).
	Assume that
	$
	g=\widetilde g
	$
	in a collar neighborhood of the connected boundary component \(\Gamma\). If the corresponding local Dirichlet--to--Neumann maps coincide on a nonempty open subset
	$
	\mathcal O\subset \Gamma\,,
	$
	then
	$
	\Lambda_{g,\Gamma}
	=
	\Lambda_{\widetilde g,\Gamma}
	$
	on the whole boundary component \(\Gamma\)\,.
\end{theorem}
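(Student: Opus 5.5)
Write \(\Lambda_{g,\Gamma}\) for the DN map with data supported on \(\Gamma\): boundary values \(\psi\in C^\infty(\Gamma)\) on \(\Gamma\), zero on the other components of \(\partial M\), and Neumann data read on \(\Gamma\). Let \(U\) be a connected collar of \(\Gamma\) on which \(g=\widetilde g\). The plan has three steps: encode the hypothesis as the vanishing of a kernel, use unique continuation in the collar to show the kernel vanishes off the diagonal everywhere, and then use self-adjointness to force the remaining diagonal part to zero.

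\textbf{Step 1: pass to Green's kernels.} Let \(G\) and \(\widetilde G\) be the Dirichlet Green functions of \(-\Delta_g\) and \(-\Delta_{\widetilde g}\) on \(M\). The Schwartz kernel of \(\Lambda_{g,\Gamma}\) is \(\partial_{\nu_x}\partial_{\nu_y}G(x,y)\) for \(x,y\in\Gamma\), \(x\neq y\), and similarly for \(\widetilde g\). Since the metrics agree near \(\Gamma\), the volume forms and normals agree there, and the two kernels have the same leading singularity. Put \(R=G-\widetilde G\) on \(U\times U\). Fix \(y\in U\). Then \(x\mapsto R(x,y)\) satisfies \(\Delta_g R(\cdot,y)=0\) on \(U\), because the delta sources cancel, and \(R(\cdot,y)=0\) on \(\Gamma\). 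Rather than working pointwise, I would use the smoother object
\[
w_\psi=u_\psi-\widetilde u_\psi \quad\text{on } U,
\]
where \(u_\psi\) and \(\widetilde u_\psi\) are the \(g\)- and \(\widetilde g\)-harmonic extensions of \(\psi\). This \(w_\psi\) is \(g\)-harmonic on \(U\) and vanishes on \(\Gamma\). The hypothesis says that \(\partial_\nu w_\psi=0\) on \(\mathcal O\) for every \(\psi\in C_c^\infty(\mathcal O)\).

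\textbf{Step 2: unique continuation in the collar.} For \(\psi\in C_c^\infty(\mathcal O)\), the function \(w_\psi\) has zero Cauchy data on \(\mathcal O\). Unique continuation for Cauchy data on the open piece \(\mathcal O\) of \(\partial U\), together with connectedness of \(U\), gives \(w_\psi\equiv 0\) on \(U\). Hence \(\partial_\nu w_\psi=0\) on all of \(\Gamma\), that is,
\[
\Lambda_{g,\Gamma}\psi=\Lambda_{\widetilde g,\Gamma}\psi \quad\text{on } \Gamma \text{ for all } \psi\in C_c^\infty(\mathcal O).
\]
Now set \(K=\Lambda_{g,\Gamma}-\Lambda_{\widetilde g,\Gamma}\). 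Each DN map is self-adjoint on \(L^2(\Gamma)\); the boundary measures agree because the metrics agree near \(\Gamma\). So \(K\) is self-adjoint, and \(\langle K\phi,\psi\rangle=\langle \phi,K\psi\rangle=0\) for all \(\phi\in C^\infty(\Gamma)\) and \(\psi\in C_c^\infty(\mathcal O)\). Thus \(K\phi\) vanishes on \(\mathcal O\) for every smooth \(\phi\).

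Next take \(\phi\in C^\infty(\Gamma)\) arbitrary. Then \(w_\phi\) is \(g\)-harmonic on \(U\), vanishes on \(\Gamma\), and has \(\partial_\nu w_\phi=K\phi=0\) on \(\mathcal O\). Unique continuation again gives \(w_\phi\equiv0\) on \(U\), hence \(K\phi=0\) on all of \(\Gamma\). This is exactly \(\Lambda_{g,\Gamma}=\Lambda_{\widetilde g,\Gamma}\).

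\textbf{Main obstacle.} The key step is the symmetric use of self-adjointness, which moves the vanishing from \(\phi\in C_c^\infty(\mathcal O)\) to arbitrary \(\phi\in C^\infty(\Gamma)\). It requires checking that both maps are symmetric with respect to the same pairing on \(\Gamma\). Green's formula gives \(\int_\Gamma \Lambda_g\phi\,\psi\,dS_g=\int_M \langle du_\phi,du_\psi\rangle_g\,dV_g\), with zero data on the other components. Since \(dS_g=dS_{\widetilde g}\) on \(\Gamma\), the pairing is common to both maps. The second point to verify is that unique continuation from the partial Cauchy data on \(\mathcal O\) applies. This holds because \(w\) is smooth up to \(\Gamma\), so one can extend \(w\) by zero across \(\mathcal O\), using a slight extension of the metric, and invoke interior unique continuation for the Laplacian.
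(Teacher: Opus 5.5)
Your proof is correct and follows essentially the same route as the paper. Unique continuation in the collar for \(w_\psi=u_\psi-\widetilde u_\psi\) gives \(\Lambda_{g,\Gamma}\psi=\Lambda_{\widetilde g,\Gamma}\psi\) on all of \(\Gamma\) for \(\psi\in C_c^\infty(\mathcal O)\); self-adjointness of the difference then gives vanishing on \(\mathcal O\) for arbitrary smooth data, and a second application of unique continuation concludes. The Green's-kernel material in Step 1 and the ``off the diagonal'' phrasing of your plan are never used and can be dropped.
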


\medskip\noindent
The proof combines boundary unique continuation with a self-adjointness argument. While the result may be well known to specialists, we have not been able to locate an explicit reference. 
We include it here because it isolates a simple local-to-global propagation mechanism based on self-adjointness, which will reappear in a different context in the proof of our main result.

\medskip\noindent
As we shall see in the final part of the introduction (see (\ref{Borg})), in the warped product
setting, the local Borg--Marchenko theorem shows that coincidence of the
metrics in a collar neighborhood of the boundary is equivalent to a
fixed exponential decay of the spectral coefficients of the difference
of the corresponding Dirichlet--to--Neumann maps. This naturally leads to the following abstract
propagation result, in which the geometric collar assumption of
Theorem~\ref{firstmainresult} is replaced by an exponential spectral assumption.

\begin{theorem}[Exponential spectral propagation] \label{expthm}
Let $M$ be a smooth compact orientable connected manifold with smooth
connected boundary $K=\partial M$, and let $\mathcal O\subset K$ be a nonempty open
subset. Let $g$ and $\widetilde g$ be two smooth Riemannian metrics on $M$ inducing
the same Riemannian metric $g_K$ on $K$. Let
\[
0=\lambda_0<\lambda_1<\lambda_2<\cdots
\]
be the distinct eigenvalues of the Laplace--Beltrami operator $\Delta_{g_K}$ on $K$, and let
\[
P_k:L^2(K)\longrightarrow E_{\lambda_k}
\]
be the associated spectral projections. Assume that
$
\Lambda_{g,\mathcal O}
=
\Lambda_{\widetilde g,\mathcal O}
$
and that there exist constants $C>0$ and $\varepsilon>0$ such that
\[
\|P_k (\Lambda_g-\Lambda_{\widetilde g})\|_{\mathcal L(L^2(K))}
\le
C e^{-\varepsilon\sqrt{\lambda_k}}\,,
\qquad k\ge0\,.
\]
Then
$
\Lambda_g=\Lambda_{\widetilde g}\,.
$
\end{theorem}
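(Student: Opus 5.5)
Let $D=\Lambda_g-\Lambda_{\widetilde g}$. We first show that $D$ is a smoothing operator with an analytic-type decay structure, and then use the Jerison--Lebeau spectral inequality to propagate vanishing from $\mathcal O$ to $K$. For $\psi\in C_c^\infty(\mathcal O)$, the solution $u$ of the local problem is the full Dirichlet solution with datum $\psi$ extended by zero. Hence $\Lambda_{g,\mathcal O}\psi=(\Lambda_g\psi)|_{\mathcal O}$, and the hypothesis gives
\[
(D\psi)|_{\mathcal O}=0\qquad\text{for all }\psi\in C_c^\infty(\mathcal O).
\]
Both DN maps are self-adjoint on $L^2(K,dv_{g_K})$, because the induced boundary metric, and hence the boundary measure, is the same. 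Therefore $D$ is self-adjoint, and the Schwartz kernel $D(x,y)$ satisfies $D(x,y)=\overline{D(y,x)}$. The hypothesis says $D(x,y)=0$ on $\mathcal O\times\mathcal O$.

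Next we exploit the spectral assumption. Write $D=\sum_k P_kD$. The decay $\|P_kD\|\le Ce^{-\varepsilon\sqrt{\lambda_k}}$, together with Weyl's law and Sobolev bounds $\|\phi\|_{L^\infty}\lesssim \lambda^{n/4}\|\phi\|_{L^2}$ on eigenfunctions, shows that $D$ maps $L^2(K)$ into functions of the form
\[
D\psi=\sum_k P_kD\psi=\sum_k e_k,\qquad e_k\in E_{\lambda_k},\qquad \|e_k\|_{L^2}\le Ce^{-\varepsilon\sqrt{\lambda_k}}\|\psi\|_{L^2}.
\]
Such functions extend holomorphically to a Grauert tube, and equivalently they are boundary traces $F(0,\cdot)$ of the harmonic function $F(t,x)=\sum_k \cosh(t\sqrt{\lambda_k})e_k(x)$ on $(-\varepsilon',\varepsilon')\times K$ for $\varepsilon'<\varepsilon$. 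This is where the Jerison--Lebeau / Le~Rousseau--Lebeau theory enters. In the formulation of Le~Rousseau--Lebeau, the interpolation inequality for $\partial_t^2+\Delta_{g_K}$ on $(-\varepsilon',\varepsilon')\times K$ states that if $F$ is harmonic there and $F(0,\cdot)=\partial_tF(0,\cdot)=0$ on $\mathcal O$, then $F\equiv0$. More directly, the spectral inequality
\[
\Bigl\|\sum_{\lambda_k\le\Lambda}e_k\Bigr\|_{L^2(K)}\le C_1e^{C_1\sqrt\Lambda}\Bigl\|\sum_{\lambda_k\le\Lambda}e_k\Bigr\|_{L^2(\mathcal O)}
\]
combined with the tail bound $\sum_{\lambda_k>\Lambda}\|e_k\|\lesssim e^{-\varepsilon\sqrt\Lambda/2}$ gives the following. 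If $\sum_k e_k$ vanishes on $\mathcal O$, then $\|\sum_{\lambda_k\le\Lambda}e_k\|\lesssim e^{(C_1-\varepsilon/2)\sqrt\Lambda}$. This estimate is useless when $C_1>\varepsilon$, so the quantitative route alone does not close; we therefore use the qualitative unique continuation for the harmonic extension $F$. Since $\partial_tF(0,\cdot)=0$ identically by evenness, the vanishing of $F(0,\cdot)$ on $\mathcal O$ gives Cauchy data zero on $\{0\}\times\mathcal O$. Unique continuation for the elliptic operator $\partial_t^2+\Delta_{g_K}$ then yields $F\equiv0$, hence $D\psi\equiv0$ on $K$.

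The argument in order is as follows. (i) For $\psi\in C_c^\infty(\mathcal O)$, $D\psi$ vanishes on $\mathcal O$ and has the spectral decay above, so by the harmonic-extension argument $D\psi=0$ on all of $K$. (ii) By self-adjointness, for any $\phi\in L^2(K)$ and $\psi\in C_c^\infty(\mathcal O)$ we have $\langle D\phi,\psi\rangle=\langle\phi,D\psi\rangle=0$, so $D\phi$ vanishes on $\mathcal O$. (iii) Apply the same harmonic-extension argument to $D\phi$, which again enjoys the decay bound, to conclude $D\phi=0$ everywhere. Thus $\Lambda_g=\Lambda_{\widetilde g}$. Note that step (ii) is exactly the self-adjointness mechanism of Theorem~\ref{firstmainresult}, with the collar-plus-unique-continuation step replaced by spectral analyticity.

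The main obstacle is step (i): making rigorous that a function with exponentially decaying spectral components vanishing on an open set must vanish identically. We need to check that $F$ converges in $H^s$ on the slab $|t|<\varepsilon$ uniformly in $\psi$, which uses Weyl's law to absorb the multiplicities, and that the unique continuation used (Le~Rousseau--Lebeau for $\partial_t^2+\Delta_{g_K}$ from a hypersurface piece $\{0\}\times\mathcal O$) applies with only smooth coefficients. The latter is standard via Carleman estimates, since zero Cauchy data on $\{0\}\times\mathcal O$ allows extension by zero across it.
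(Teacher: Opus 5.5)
Your proof is correct and follows the paper's structure exactly. You first show $D\psi=0$ on $K$ for $\psi\in C_c^\infty(\mathcal O)$, then use self-adjointness to get $D\phi=0$ on $\mathcal O$ for every $\phi$, then repeat the propagation step; the only difference is that the paper cites Proposition~5.6 of Le~Rousseau--Lebeau for that step, whereas you prove it directly via the harmonic extension $F(t,\cdot)=\sum_k\cosh(t\sqrt{\lambda_k})\,P_kD\psi$ on $(-\varepsilon',\varepsilon')\times K$ and elliptic unique continuation from zero Cauchy data on $\{0\}\times\mathcal O$, which is the mechanism behind the cited result (your Grauert-tube aside would need $K$ real-analytic, which is not assumed, but you never use it).
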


\medskip
\begin{remark}
	Set
	$
	A=\Lambda_g-\Lambda_{\widetilde g}\,.
	$
	Since the two metrics induce the same boundary metric on \(K\)\,, the two
	Dirichlet--to--Neumann maps have the same principal symbol. Hence
	$
	A\in\Psi^0(K)\,,
	$
	that is, \(A\) is a pseudodifferential operator of order \(0\). In
	particular, \(A\) extends to a bounded operator on \(L^2(K)\). The theorem extends without change to manifolds with nonconnected
	boundary by considering each connected boundary component separately.
\end{remark}

\mn
When the boundary \(K\) is a compact real-analytic Riemannian manifold,
a theorem of Seeley \cite{Seeley1969} characterizes real analyticity by
the exponential decay of the eigenfunction coefficients. First observe that our assumption
on the spectral projections immediately implies 
that, with $f=(\Lambda_g-\Lambda_{\tilde g})\psi$, we have
\begin{equation}\label{Pkf}
	\|P_kf\|_{L^2(K)}
	\le
	Ce^{-\varepsilon\sqrt{\lambda_k}}\,,
	\qquad k\ge0\,.
\end{equation}
In particular,  if $(\phi_\ell)$ is an orthonormal basis of eigenvectors of the Laplace Beltrami operator $-\Delta_{g_K}$, we get
$P_k \phi_\ell=\phi_\ell$ when $\lambda_k$ is the eigenvalue 
 associated with $\phi_\ell$ ($k=k(\ell)$) and 
  we obtain
 \begin{equation}\label{coef-seeley}
 	\big|\langle f,\phi_\ell\rangle\big|
 	\le
 	Ce^{-\varepsilon\sqrt{\lambda_{k(\ell)}}}\,,
 \end{equation}
 which corresponds to Seeley's criterion on the individual eigenfunction coefficients.

\par\noindent
Conversely, Seeley's
hypothesis implies the exponential decay of the spectral projections up
to the factor \(\sqrt{d_k}\), where
\(d_k=\dim E_{\lambda_k}=O(\lambda_k^{(n-1)/2})\) by Weyl's 
law, with optimal remainder \cite{Hormander68}.
Hence this polynomial factor can be absorbed into the exponential decay
by replacing  \(\epsilon\) with any constant \(\epsilon'<\epsilon\). 

\par\noindent
Thus, in the real-analytic setting, Theorem~\ref{expthm} can be proved
using the spectral characterization of analyticity together with the
self-adjointness of \(\Lambda_g-\Lambda_{\widetilde g}\)\,, without
appealing to the Jerison--Lebeau propagation theorem.

\mn 
We emphasize that the proof of Theorem~\ref{expthm} does not require any analyticity
assumption on the boundary manifold \(K\)\,. The quasi-analytic
propagation results established below are obtained under additional
assumptions on \(K\)\,, namely that it is either a compact Riemannian
symmetric space or a compact quasi-analytic Riemannian manifold.
Their strength is that they replace the exponential spectral decay in
Theorem~\ref{expthm} by the much weaker Ingham-type quasi-analytic
decay~\cite{Ingham1934}. It would be interesting to determine whether these additional
assumptions on \(K\) are genuinely necessary, or whether the same
quasi-analytic propagation principle remains valid for arbitrary smooth
compact Riemannian manifolds, as is the case for the exponential
propagation theorem of Jerison--Lebeau.

\mn 
We now assume that the boundary manifold \(K\) is a compact (necessarily orientable) connected
Riemannian symmetric space. Typical examples include flat tori,
spheres, odd-dimensional real projective spaces, complex projective
spaces, compact Lie groups, and finite products of such spaces.  In this setting, Ganguly and Thangavelu \cite{GT} established an
Ingham-type quasi-analytic propagation theorem, building on the
classical work of Ingham \cite{Ingham1934}; see also Koosis
\cite{Koosis} for an elegant exposition of Ingham's theory. This leads
to the following local-to-global propagation result.

\begin{theorem}[Ingham-type propagation on symmetric spaces]
	\label{GTthm}
	Let $M$ be a smooth compact orientable connected manifold with smooth
	connected boundary $K=\partial M$, and let
	$\mathcal O\subset K$ be a nonempty open subset.
	Let $g$ and $\widetilde g$ be two smooth Riemannian metrics on $M$
	inducing the same Riemannian metric $g_K$ on $K$. Assume that $(K,g_K)$ is a compact connected Riemannian symmetric space. Let
	\[
	0=\lambda_0<\lambda_1<\lambda_2<\cdots
	\]
	be the distinct eigenvalues of the Laplace--Beltrami operator
	$\Delta_{g_K}$ on $K$\,, and let
	\[
	P_k:L^2(K)\longrightarrow E_{\lambda_k}
	\]
	be the associated spectral projections. Assume that
	$
	\Lambda_{g,\mathcal O}
	=
	\Lambda_{\widetilde g,\mathcal O}\,,
	$
	and that there exist a constant $C>0$ and a positive decreasing
	function
	\[
	\theta(t)\longrightarrow0
	\quad\text{as }t\to+\infty\,,
	\qquad
	\int_T^{+\infty}\frac{\theta(t)}{t}\,dt=+\infty\,,
	\]
	and
	\begin{equation}\label{estspectral}
	\|P_k(\Lambda_g-\Lambda_{\widetilde g})\|_{\mathcal L(L^2(K))}
	\le
	C\exp\!\left(-\sqrt{\lambda_k}\,
	\theta(\sqrt{\lambda_k})\right)\,,
	\qquad k\ge0\,.
	\end{equation}
	Then
$
	\Lambda_g=\Lambda_{\widetilde g}\,.
$
\end{theorem}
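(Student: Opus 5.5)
We follow the self-adjointness strategy of Theorem~\ref{firstmainresult} and Theorem~\ref{expthm}, replacing the Jerison--Lebeau exponential propagation by the Ganguly--Thangavelu Ingham-type theorem \cite{GT}. Set $A=\Lambda_g-\Lambda_{\widetilde g}$. Since $g$ and $\widetilde g$ induce the same metric $g_K$, the Remark above gives $A\in\Psi^0(K)$, so $A$ is bounded on $L^2(K)$. It is also symmetric there, because each $\Lambda_g$ is self-adjoint with respect to the measure of $g_K$ (Green's formula), and the two measures coincide. The hypothesis $\Lambda_{g,\mathcal O}=\Lambda_{\widetilde g,\mathcal O}$ means that $\langle A\psi,\varphi\rangle=0$ for all $\psi,\varphi\in C_c^\infty(\mathcal O)$. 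In other words, $(A\psi)|_{\mathcal O}=0$ for every $\psi\in C_c^\infty(\mathcal O)$.

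\mn
\textbf{Step 1 (first propagation).} Fix $\psi\in C_c^\infty(\mathcal O)$ and put $f=A\psi\in L^2(K)$. From \eqref{estspectral},
\[
\|P_kf\|_{L^2(K)}\le C\|\psi\|_{L^2}\exp\!\bigl(-\sqrt{\lambda_k}\,\theta(\sqrt{\lambda_k})\bigr).
\]
Also $f$ vanishes on the nonempty open set $\mathcal O$. On the compact symmetric space $(K,g_K)$, the Ganguly--Thangavelu theorem states that a function whose spectral projections satisfy an Ingham-type bound with $\int^\infty\theta(t)t^{-1}\,dt=+\infty$, and which vanishes on an open set, vanishes identically. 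Hence $f\equiv0$, that is, $A\psi=0$ for all $\psi\in C_c^\infty(\mathcal O)$.

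\mn
\textbf{Step 2 (self-adjointness).} For every $\varphi\in C^\infty(K)$ and $\psi\in C_c^\infty(\mathcal O)$, symmetry gives
\[
\langle A\varphi,\psi\rangle=\langle\varphi,A\psi\rangle=0.
\]
Therefore $(A\varphi)|_{\mathcal O}=0$ for every $\varphi$. The bound \eqref{estspectral} applies equally to $A\varphi$, since it is an operator-norm bound on $P_kA$. Applying the Ganguly--Thangavelu theorem a second time gives $A\varphi\equiv0$. By density of $C^\infty(K)$ in $L^2(K)$, we conclude $A=0$, i.e.\ $\Lambda_g=\Lambda_{\widetilde g}$.

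\mn
\textbf{Main obstacle.} The delicate point is matching our hypothesis to the exact form of the result in \cite{GT}. That theorem may be stated in terms of the heat kernel, the spherical transform, or the $L^2$ norms of the isotypic components $P_kf$ indexed by $\sqrt{\lambda_k}$, possibly with a shifted spectral parameter such as $\sqrt{\lambda_k+|\rho|^2}$ and with slightly different growth conventions on $\theta$. I would first check that the shift and the multiplicities $d_k=O(\lambda_k^{(n-1)/2})$ can be absorbed. Replacing $\theta$ by $\theta/2$ for large $t$ preserves the divergence of $\int^\infty\theta(t)t^{-1}\,dt$ and dominates any polynomial loss, because $t\theta(t)/\log t\to\infty$ may be assumed after a harmless modification of $\theta$. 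If that growth is not automatic, I would instead use the coefficient-wise form of the theorem.
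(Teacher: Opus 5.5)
Your proposal is correct and coincides with the paper's proof. The paper reruns the argument of Theorem~\ref{expthm}: propagate for $A\psi$, use self-adjointness to get $(A\varphi)|_{\mathcal O}=0$, then propagate again for $A\varphi$. It uses Ganguly--Thangavelu's Theorem~1.3, quoted as Proposition~\ref{quasi}, in place of Le~Rousseau--Lebeau.

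That proposition is stated directly for $\|P_kF\|_{L^2(K)}$ with parameter $\sqrt{\lambda_k}$, so your ``main obstacle'' does not arise. This is just as well, because your claim that $t\theta(t)/\log t\to\infty$ can be arranged by a harmless modification of $\theta$ is false in general. A decreasing $\theta$ with divergent $\int^\infty\theta(t)t^{-1}\,dt$ can have $t\theta(t)=\log t$ along a sequence, and shrinking $\theta$ (the only change that keeps the hypothesis valid) makes this worse. So polynomial multiplicity factors could not be absorbed as in the exponential case.
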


\mn 
\begin{remark}
	The conclusion of Theorem~\ref{GTthm} remains valid if the global
	spectral estimate (\ref{estspectral}) is replaced by the following local pointwise
	assumption: for every \(\psi\in C_c^\infty(\mathcal O)\) and every
	\(\omega\in\mathcal O\), there exists a constant
	\(C_{\omega,\psi}>0\) such that
	\begin{equation}
	|P_k (\Lambda_g-\Lambda_{\widetilde g} )\psi(\omega)|
	\le
	C_{\omega,\psi}\,
	e^{-\sqrt{\lambda_k}\theta(\sqrt{\lambda_k})}\,,
	\qquad k\ge0\,.
	\end{equation}
Indeed, the proof is unchanged: one simply uses the pointwise version,
Theorem~1.4 of Ganguly and Thangavelu~\cite{GT}, instead of the
\(L^2\)-version, Theorem~1.3. It would be interesting to investigate whether such pointwise estimates
can be derived directly from local geometric assumptions. More
precisely, if the two metrics coincide in a neighborhood of
\(\mathcal O\)\,, then the operator
$
A=\Lambda_g-\Lambda_{\widetilde g}
$
should possess strong microlocal regularity properties over
\(\mathcal O\). In the \(C^\infty\) category, this would suggest rapid decay of the
corresponding spectral components. It would be interesting to
investigate whether additional real-analytic regularity assumptions
could lead to exponential pointwise spectral decay estimates, thereby
providing a genuinely local route to the pointwise hypothesis above.
\end{remark}

\mn 
We next consider the case where the boundary manifold \(K\) is a
compact quasi-analytic Riemannian manifold. We briefly recall the
necessary background on quasi-analytic manifolds and Denjoy--Carleman
classes in the proof of the following theorem. Our result relies on the
recent propagation theorem of Bhowmik and Pradhan~\cite{BP}. Following
their work, we restrict ourselves to the family of Ingham weights
\begin{equation}
\theta_\delta(t)=\frac{1}{(\log t)^\delta}\,,
\qquad 0<\delta\le1\,.
\end{equation}

\begin{theorem}[Ingham-type propagation on quasi-analytic manifolds]
	\label{BPthm}
	Let $M$ be a smooth compact orientable connected manifold with smooth
	connected boundary $K=\partial M$, and let
	$\mathcal O\subset K$ be a nonempty open subset.
	Let $g$ and $\widetilde g$ be two smooth Riemannian metrics on $M$
	inducing the same Riemannian metric $g_K$ on $K$. Assume in addition that $(K,g_K)$ is a compact connected quasi-analytic
	Riemannian manifold. Let
	\[
	0=\lambda_0<\lambda_1<\lambda_2<\cdots
	\]
	be the distinct eigenvalues of the Laplace--Beltrami operator
	$\Delta_{g_K}$ on $K$, and let
	\[
	P_k:L^2(K)\longrightarrow E_{\lambda_k}
	\]
	be the associated spectral projections. Assume that
	$
	\Lambda_{g,\mathcal O}
	=
	\Lambda_{\widetilde g,\mathcal O}\,,
	$
	and that there exist constants $C>0$ and
	\(0<\delta\le1\) such that
	\[
	\|P_k(\Lambda_g-\Lambda_{\widetilde g})\|_{\mathcal L(L^2(K))}
	\le
	C
	\exp\!\left(
	-\sqrt{\lambda_k}\,
	\theta_\delta(\sqrt{\lambda_k})
	\right),
	\qquad k\ge0\,.
	\]
	Then
	$
	\Lambda_g=\Lambda_{\widetilde g}\,.
	$
\end{theorem}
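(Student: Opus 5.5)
The proof follows the same two-step pattern as Theorem~\ref{GTthm}. Throughout, the Ganguly--Thangavelu theorem is replaced by the quasi-analytic propagation theorem of Bhowmik--Pradhan~\cite{BP}. Set $A=\Lambda_g-\Lambda_{\widetilde g}$; by the remark after Theorem~\ref{expthm}, $A\in\Psi^0(K)$ is bounded on $L^2(K)$.

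\textbf{Step 1: the function $f$ and its vanishing on $\mathcal O$.} Fix $\psi\in C_c^\infty(\mathcal O)$ and put $f=A\psi\in C^\infty(K)$. Since $\partial M=K$ is connected, the local map $\Lambda_{g,\mathcal O}\psi$ is exactly the restriction to $\mathcal O$ of $\Lambda_g\psi$. Hence the hypothesis $\Lambda_{g,\mathcal O}=\Lambda_{\widetilde g,\mathcal O}$ gives $f|_{\mathcal O}=0$.

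\textbf{Step 2: spectral decay of $f$.} The assumed operator bound gives
\[
\|P_kf\|_{L^2(K)}\le C\|\psi\|_{L^2}\exp\!\left(-\sqrt{\lambda_k}\,\theta_\delta(\sqrt{\lambda_k})\right),\qquad k\ge0 .
\]
We then recall the framework of~\cite{BP}: Denjoy--Carleman classes $C\{M_j\}$, the quasi-analytic manifold structure on $K$, and the fact that the eigenfunctions of $\Delta_{g_K}$ lie in the relevant class. Their theorem says the following. If $f\in L^2(K)$ vanishes on a nonempty open set and its spectral projections decay like $\exp(-\sqrt{\lambda_k}\,\theta_\delta(\sqrt{\lambda_k}))$ with $0<\delta\le1$, then $f\equiv0$. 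The weight family $\theta_\delta$ is non-integrable, since $\int^\infty dt/(t(\log t)^\delta)=\infty$ for $\delta\le1$, so it is admissible.

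The main obstacle is matching hypotheses precisely. One issue is whether~\cite{BP} phrases decay via $\|P_kf\|$ on distinct eigenvalues, or via coefficients with respect to an orthonormal eigenbasis, or via the spectral measure of $\sqrt{-\Delta}$. If their hypothesis is on individual coefficients, it follows directly, since $|\langle f,\phi_\ell\rangle|\le\|P_{k(\ell)}f\|$. If instead their formulation involves sums over $\lambda_k\le\Lambda^2$, the multiplicity factor $d_k=O(\lambda_k^{(n-1)/2})$ must be absorbed. A polynomial loss $\lambda_k^{N}$ is dominated by $\exp(\tfrac12\sqrt{\lambda_k}\,\theta_\delta(\sqrt{\lambda_k}))$, because $\sqrt{\lambda}/(\log\sqrt\lambda)^\delta\gg\log\lambda$. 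Replacing $\theta_\delta$ by $\tfrac12\theta_\delta$ keeps the divergence of $\int\theta/t$, and I would check that~\cite{BP} permits such constant multiples. I would also check any convention that normalises the weight against the $\sqrt{\lambda}$ frequency rather than against $\lambda$. Applying the theorem gives $A\psi=0$ for every $\psi\in C_c^\infty(\mathcal O)$.

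\textbf{Step 3: self-adjointness, as in Theorem~\ref{firstmainresult}.} Since $g$ and $\widetilde g$ induce the same metric $g_K$, both DN maps are symmetric for the same $L^2(K,dV_{g_K})$ pairing. Thus $A$ is symmetric, and $A^*\psi=A\psi=0$ for all $\psi\in C_c^\infty(\mathcal O)$. Now take any $\varphi\in C^\infty(K)$ and set $h=A\varphi$. For $\psi\in C_c^\infty(\mathcal O)$ we get $\langle h,\psi\rangle=\langle\varphi,A\psi\rangle=0$, so $h|_{\mathcal O}=0$. The decay bound applies to $h$ too, with constant $C\|\varphi\|_{L^2}$. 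A second application of the Bhowmik--Pradhan theorem gives $h\equiv0$. Hence $A\varphi=0$ for all smooth $\varphi$, and by density $\Lambda_g=\Lambda_{\widetilde g}$.

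Note that Step 1 alone cannot conclude: knowing $A\psi=0$ for $\psi$ supported in $\mathcal O$ does not give $A=0$. The symmetric decay hypothesis is used twice, once for $A\psi$ and once for $A\varphi$.
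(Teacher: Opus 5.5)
Your proposal is correct and follows the paper's proof. Both arguments show that $F=A\psi$ vanishes on $\mathcal O$, conclude $F\equiv0$ via Bhowmik--Pradhan, and then repeat the step for $A\varphi$ using self-adjointness. The one difference is that the paper does not cite the Ingham-type theorem as a black box: it derives a moment bound $\|(-\Delta_K)^pF\|_{L^2}\le C\,M_{2p}$ with $M_k=C^k k!\,(\log(e+k))^{\delta k}$ and then applies Corollary~1.13 of Bhowmik--Pradhan, which also settles your hypothesis-matching concerns, since factors $C^k$ do not affect quasi-analyticity.
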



\mn
We now turn to an application of the preceding propagation
principles to a class of conformally warped product metrics. Our
starting point is the geometric assumption in
Theorem~\ref{firstmainresult}. Although natural from the viewpoint of
boundary unique continuation, one may wonder whether the requirement
that the two metrics coincide in a collar neighborhood of the boundary
is really necessary, or whether it can be replaced by a weaker
boundary closeness condition. In the warped product setting, the answer is provided by
Weyl--Titchmarsh theory. Using the diagonalization of the
Dirichlet--to--Neumann map together with the
Weyl--Titchmarsh representation established in~\cite{DKN}, we derive
quantitative estimates for the spectral coefficients of the difference
of two Dirichlet--to--Neumann maps from suitable boundary closeness
conditions on the conformal factors. Combined with the abstract
propagation principles established in the first part of the paper,
these estimates yield new local-to-global uniqueness results in the
warped product setting.

\mn
More precisely, we consider non-compact manifolds of the form
\begin{equation}\label{warped-manifold}
	M=(0,1]\times K\,,
\end{equation}
where \(r\in(0,1]\) is a radial variable and \(K\) is either a compact
orientable connected Riemannian symmetric space or a compact orientable connected
quasi-analytic Riemannian manifold. The boundary of \(M\) is naturally
identified with the boundary manifold
\[
\partial M=\{1\}\times K\simeq K\,.
\]

\par\noindent
We endow \(M\) with the warped product metric
\begin{equation}\label{warped-metric}
	g=c(r)^4\bigl(dr^2+r^2g_K\bigr)\,,
\end{equation}
A natural question then is whether the manifold structure of $M$ and the Riemannian metric $g$ can be extended across $r=0$ to provide a smooth compact Riemannian manifold $(\check M, \check g)$ with connected boundary $\partial \check M = \{1\}\times K$ (which we identify with $K$). As we now briefly recall, the answer to this question is generally no, so that we need to allow for a geometric setting that is possibly singular at $r=0$. First, at the level of the manifold structure, the extension of the differentiable structure of the non-compact manifold $M=(0,1]\times K$ across $r=0$ as a compact orientable smooth manifold $\check M$ with connected boundary $\partial {\check M}=K$, in other words the "fillability" of K as the boundary of a compact orientable smooth manifold, puts significant topological conditions on $K$, such as the vanishing of all the Stiefel-Whitney numbers of its tangent bundle (these conditions are in fact necessary and sufficient if $\dim K \not\equiv 4 \mod 1,2,3$; the case $\dim K \equiv 4$ requires in addition the vanishing of the Pontrjagin numbers of the tangent bundle, see~\cite{MS}). These imply for example that the Euler characteristic
\(\chi(K)\) should be even. In particular, among compact
Riemannian symmetric spaces, they rule out examples such as the
complex projective plane \(\mathbb{P}_{2}(\mathbb{C})\), since
\(\chi(\mathbb{P}_{2}(\mathbb{C}))=3\). This is why the range of $r$ in our model is restricted to the semi-open interval $(0,1]$. Second, assuming that the topological conditions guaranteeing  the "fillability" of K as the boundary of a compact orientable smooth manifold are satisfied, the additional requirement that the metric \eqref{warped-metric} extend smoothly across the point corresponding to $r=0$ imposes rather restrictive conditions. For example, when $K=S^{d-1}$, smoothness of the metric at the origin requires that the transverse metric $g_K$ be the standard round metric on the sphere and that all odd-order derivatives of the warping factor vanish at $r=0$, namely
\[
c^{(2k+1)}(0)=0,
\qquad k\ge 0,
\]
see \cite[Section~4.3.4]{Pe}. In general, neither of these assumptions will be imposed here.
Consequently, the metric \eqref{warped-metric} should be regarded as a possibly
singular warped product metric near \(r=0\), so that our model generally
gives rise to a metric cone with a singular vertex at \(r=0\), except in
very special cases. This explains why the geometric applications below
are formulated in this singular setting, rather than for a smooth compact
completion.

\medskip\noindent
We thus consider metrics of the form
\[
g=c(r)^4(dr^2+r^2g_K)\,,
\qquad
\widetilde g=\widetilde c(r)^4(dr^2+r^2g_K)\,,
\]
where \(g_K\) is a Riemannian metric on a closed orientable manifold \(K\)\,. The warped product structure allows one to separate variables and to
diagonalize the Dirichlet--to--Neumann maps in the eigenbasis of the
Laplace--Beltrami operator on \(K\)\,. In particular, if
$
c(1)=\widetilde c(1)\,,
$
then, as explained above, the difference
$
A=\Lambda_g-\Lambda_{\widetilde g}
$
is a bounded operator on \(L^2(K)\) and is diagonal with respect to the
spectral decomposition of the Laplace--Beltrami operator on \(K\)\,, namely
\begin{equation}\label{decompfourier}
	A=\sum_{k\ge0}a_kP_k\,.
\end{equation}
A careful reading of the proof of the local Borg--Marchenko theorem
(\cite[Theorem~1.4]{DKN}) shows that the exponential decay
\begin{equation}\label{Borg}
	|a_k|
	\le
	Ce^{-\epsilon\sqrt{\lambda_k}}\,,
\end{equation}
is equivalent to the coincidence of the conformal factors in a collar
neighborhood of the boundary. Consequently, in this particular setting,
the exponential spectral assumption of the previous theorem is equivalent
to the collar assumption of Theorem~\ref{firstmainresult}. 

\mn 
Our main objective is to identify geometric boundary closeness
conditions that give rise to the quasi-analytic spectral estimates
appearing in Theorems~\ref{GTthm} and~\ref{BPthm}.
More precisely, we seek conditions on the conformal factors that imply
the decay estimate
\[
|a_k|
\le
Ce^{-\sqrt{\lambda_k}\theta(\sqrt{\lambda_k})}\,,
\]
where \(\theta\) is an admissible Ingham weight in the symmetric-space
setting, and
\[
\theta(t)=\theta_\delta(t)=\frac1{(\log t)^\delta}\,,
\qquad 0<\delta\le1\,,
\]
in the quasi-analytic setting.
The local-to-global propagation results then follow immediately from
Theorems~\ref{GTthm} and~\ref{BPthm}. To formulate the corresponding geometric
application, it is convenient to introduce the logarithmic variable
\[
x=-\log r,
\qquad r=e^{-x}\,,
\]
which identifies \(M\) with the infinite cylinder
\(
[0,\infty)\times K.
\)
In these coordinates, the metrics take the form
\begin{equation}
	g=f(x)^4(dx^2+g_K)\,,
	\qquad
	\widetilde g=\widetilde f(x)^4(dx^2+g_K)\,,
\end{equation}
where
\begin{equation}
	f(x)=e^{-x/2}c(e^{-x})\,,
	\qquad
	\widetilde f(x)=e^{-x/2}\widetilde c(e^{-x})\,.
\end{equation}

\medskip\noindent
We first consider the symmetric-space setting, so that
Theorem~\ref{GTthm} applies.  
The key point is to relate the geometric boundary closeness of the
conformal factors to the spectral assumption in
Theorem~\ref{GTthm}. This is achieved by combining the local
Borg--Marchenko theorem with quantitative Weyl--Titchmarsh estimates.
The abstract propagation result of Theorem~\ref{GTthm} then immediately
yields the following local-to-global uniqueness theorem.

\begin{theorem}[Symmetric-space case]\label{applicationsymetrique}
	Let \((K,g_K)\) be a compact orientable connected Riemannian symmetric space, and let
	\[
	g=c(r)^4(dr^2+r^2g_K)\,,
	\qquad
	\widetilde g=\widetilde c(r)^4(dr^2+r^2g_K)
	\]
	be two warped product metrics on
	\(M=(0,1]\times K\)\,, where
	\(c,\widetilde c\in C^m([0,1])\)\,, \(m\ge2\)\,, are positive functions. Define
	\[
	f(x)=e^{-x/2}c(e^{-x})\,,
	\qquad
	\widetilde f(x)=e^{-x/2}\widetilde c(e^{-x})\,.
	\]
	Assume that there exist \(C>0\) and \(\varepsilon>0\) such that
	\[
	|f^{(j)}(x)-\widetilde f^{(j)}(x)|
	\le
	Ce^{\Phi(x)}\,,
	\qquad
	j=0,1,2,
	\quad
	x\in(0,\varepsilon],
	\]
	where
	\[
	\Phi(x)
	=
	\inf_{t\ge T}\bigl(2xt-t\theta(t)\bigr)\,,
	\]
	and \(\theta:[T,\infty)\to(0,\infty)\) is a positive decreasing function
	satisfying
	\[
	\theta(t)\longrightarrow0\,,
	\qquad
	\int_T^\infty\frac{\theta(t)}{t}\,dt=+\infty\,.
	\]
	Let \(\mathcal O\subset K\) be a nonempty open subset. If
	$
	\Lambda_{g,\mathcal O}
	=
	\Lambda_{\widetilde g,\mathcal O}\,,
	$
	then
	$
	\Lambda_g
	=
	\Lambda_{\widetilde g}\,.
	$
\end{theorem}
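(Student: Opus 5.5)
The plan is to reduce Theorem~\ref{applicationsymetrique} to Theorem~\ref{GTthm}. That requires the spectral estimate
\[
|a_k|\le C\exp\bigl(-\sqrt{\lambda_k}\,\theta(\sqrt{\lambda_k})\bigr)
\]
for the diagonal coefficients in \eqref{decompfourier}. Theorem~\ref{GTthm} is stated for a smooth compact manifold, whereas here $M=(0,1]\times K$ is possibly singular at $r=0$. I would therefore check that its proof uses only three facts: that $A=\Lambda_g-\Lambda_{\widetilde g}$ is bounded and self-adjoint on $L^2(K)$, that $A$ satisfies the spectral bound, and the local hypothesis. All three hold in the warped setting, by the separation of variables and Weyl--Titchmarsh representation of~\cite{DKN}.

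\textbf{Step 1 (separation of variables).} In the variable $x=-\log r$, the metric is conformal to $dx^2+g_K$ with conformal factor $f^4$. Writing $u=f^{-(n-2)/2}\cdot v$ with the appropriate conformal weight (as in~\cite{DKN}), harmonic functions on each mode $P_k$ reduce to the Schrödinger equation $-v''+q_f(x)v=-\lambda_k v$ on $[0,\infty)$. Here $q_f$ is built from $f,f',f''$, which explains why $j=0,1,2$ appear in the hypothesis. The DN eigenvalue is then an affine expression in the Weyl--Titchmarsh function, $\Lambda_g|_{E_{\lambda_k}}=\alpha\,M_f(\lambda_k)+\beta$, where $\alpha,\beta$ depend only on $c(1),c'(1)$. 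Because the hypothesis holds on $(0,\varepsilon]$ and the functions are $C^2$, we get $c(1)=\widetilde c(1)$ and $c'(1)=\widetilde c'(1)$. Hence $a_k=\alpha\bigl(M_f(\lambda_k)-M_{\widetilde f}(\lambda_k)\bigr)$.

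\textbf{Step 2 (quantitative local Borg--Marchenko).} This is the key step. I would use Simon's representation
\[
M(-\kappa^2)=-\kappa-\int_0^a A(\alpha)e^{-2\alpha\kappa}\,d\alpha+O(e^{-2a\kappa}),
\]
or equivalently the Volterra/Gel'fand--Levitan kernel estimates from the proof of~\cite[Theorem~1.4]{DKN}. The goal is to show that the difference of the $A$-functions, or of the Jost-type solutions, on $[0,\varepsilon]$ is controlled by $|q_f-q_{\widetilde f}|$ on $[0,\varepsilon]$, hence by $Ce^{\Phi(x)}$. With $\kappa=\sqrt{\lambda_k}$, this gives
\[
|a_k|\le C\int_0^\varepsilon e^{\Phi(x)}e^{-2x\kappa}\,dx+Ce^{-2\varepsilon\kappa}.
\]

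\textbf{Step 3 (Legendre-type inequality).} By the definition of $\Phi$, taking $t=\kappa\ge T$ gives $\Phi(x)\le 2x\kappa-\kappa\theta(\kappa)$. Therefore the integrand is at most $e^{-\kappa\theta(\kappa)}$, and the integral is $\le\varepsilon e^{-\kappa\theta(\kappa)}$. The remainder satisfies $e^{-2\varepsilon\kappa}\le e^{-\kappa\theta(\kappa)}$ for large $\kappa$, since $\theta\to0$. Finitely many small $k$ are absorbed into $C$, using boundedness of $A$. Since $A$ is diagonal, $\|P_kA\|=|a_k|$, so \eqref{estspectral} holds, and Theorem~\ref{GTthm} gives $\Lambda_g=\Lambda_{\widetilde g}$.

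\textbf{Main obstacle.} Step 2 is the hard part: passing from the pointwise closeness of the potentials to closeness of the Weyl functions with exactly the Laplace-transform weight $e^{-2x\kappa}$, with no loss in the exponent. I would first try the Volterra integral equation for the Jost solutions $e^{-\kappa x}(1+o(1))$ on $[0,\varepsilon]$. The difference of the two solutions satisfies a Volterra equation whose source term is $(q_f-q_{\widetilde f})$ times the Jost solution. Gronwall's inequality should then give a bound of the form
\[
\int_0^\varepsilon |q_f-q_{\widetilde f}|\,e^{-2\kappa x}\,dx\;(1+O(1/\kappa)).
\]
The contribution beyond $x=\varepsilon$ is already $O(e^{-2\varepsilon\kappa})$.
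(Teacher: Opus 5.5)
Your architecture is the paper's. You diagonalize $A=\Lambda_g-\Lambda_{\widetilde g}=\sum_k a_kP_k$ with $a_k=-f(0)^{-2}\bigl(M(-\rho_k^2)-\widetilde M(-\rho_k^2)\bigr)$, bound $M-\widetilde M$ by a Laplace transform of $q_{\widetilde f}-q_f$ split at $\varepsilon$, and use $\Phi(x)\le 2x\rho-\rho\,\theta(\rho)$ (Lemma~\ref{LaplaceEstimate}). You then conclude by Ganguly--Thangavelu and self-adjointness. Your remark that only boundedness, self-adjointness and the spectral bound of $A$ are used matches what the paper does: it applies Proposition~\ref{quasi} directly to $F=A\psi$.

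The weak point is Step~2: the mechanism you say you would try first does not work. Take Jost solutions $\psi,\widetilde\psi$ normalized as $e^{-\rho x}$ at $+\infty$ and run Gronwall on $\psi-\widetilde\psi$. You get $\psi(0)-\widetilde\psi(0)\approx(2\rho)^{-1}\int_0^\infty(q_f-q_{\widetilde f})\,dx$ and $\psi'(0)-\widetilde\psi'(0)=O(1)$. So the potentials on $(\varepsilon,\infty)$ enter at polynomial size, and the smallness you need lives only in the combination $\psi'(0)\widetilde\psi(0)-\psi(0)\widetilde\psi'(0)$.

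The right tool is the Wronskian $W=\psi\widetilde\psi'-\psi'\widetilde\psi$. It satisfies $W'=(q_{\widetilde f}-q_f)\psi\widetilde\psi$ and $W(+\infty)=0$, so
\[
\psi(0)\widetilde\psi(0)\bigl(M(-\rho^2)-\widetilde M(-\rho^2)\bigr)=\int_0^\infty (q_{\widetilde f}-q_f)(x)\,\psi(x)\widetilde\psi(x)\,dx .
\]
Since $q_f,q_{\widetilde f}\in L^1(0,\infty)$, the Volterra equation for $e^{\rho x}\psi$ gives $\psi(x)=e^{-\rho x}(1+O(\rho^{-1}))$ uniformly on $[0,\infty)$. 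Hence $|M-\widetilde M|\le C\int_0^\infty|q_f-q_{\widetilde f}|e^{-2\rho x}\,dx$ for $\rho$ large, which is exactly your target.

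The paper instead quotes \cite[Lemma~4.5]{DKN}, writing the same left-hand side as $\int_0^\infty e^{-2\rho x}B[q_{\widetilde f}-q_f]\,dx$ with a Volterra operator $B$ of bounded kernel $K_1$. It controls the tail through the $H_\delta$-isomorphism property of $B$, and the part on $(0,\varepsilon)$ through monotonicity of $\Phi$. The Wronskian route is more elementary: the tail is simply bounded by $e^{-2\rho\varepsilon}\|q_f-q_{\widetilde f}\|_{L^1}$.

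Two smaller corrections.

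First, $c(1)=\widetilde c(1)$ and $c'(1)=\widetilde c'(1)$ do not follow from $C^2$ regularity alone. You need $e^{\Phi(x)}\to0$ as $x\to0^+$. Since $\lim_{x\to0^+}\Phi(x)=-\sup_{t\ge T}t\theta(t)$, this requires $\sup t\theta(t)=+\infty$, which is forced by $\int_T^\infty\theta(t)t^{-1}\,dt=+\infty$ (Lemma~\ref{explosion}).

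Second, $q_f$ must be normalized to decay at infinity for the Laplace representation; the substitution is $v=f^{d-2}u$. The modal equation is then $-v''+q_fv=-\rho_k^2v$ with $\rho_k=\sqrt{\lambda_k+(d-2)^2/4}$, not $-\lambda_k v$. So you obtain $e^{-\rho_k\theta(\rho_k)}$ rather than $e^{-\sqrt{\lambda_k}\theta(\sqrt{\lambda_k})}$. Since $\sqrt{\lambda_k}\le\rho_k\le\sqrt{\lambda_k}+(d-2)/2$ and $\theta$ is decreasing, you can replace $\theta$ by $t\mapsto\theta(t+(d-2)/2)$. This weight is still admissible, and this is the paper's ``possibly replacing $\theta$ by a smaller function''.
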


\medskip\noindent
The assumptions of Theorem~\ref{applicationsymetrique} deserve some
further comments. The function $\Phi(x)$ plays a central role in the proof. It is introduced so that the
weighted Laplace transform
\begin{equation}\label{weighted-laplace}
	I(\rho)
	=
	\int_0^\varepsilon
	e^{-2\rho x}e^{\Phi(x)}\,dx
\end{equation}
satisfies the estimate
\begin{equation}\label{weighted-laplace-estimate}
	I(\rho)
	\le
	e^{-\rho\theta(\rho)}\,,
	\qquad
	\rho\ge T\,,
\end{equation}
(see Lemma~\ref{LaplaceEstimate}). Furthermore, the assumptions on
\(\theta\) imply that
\begin{equation}\label{Phi-minus-infinity}
	\Phi(x)\longrightarrow-\infty\,,
	\qquad
	x\to0^+,
\end{equation}
(see Lemma~\ref{explosion}). Thus the condition
\begin{equation}
	|f^{(j)}(x)-\widetilde f^{(j)}(x)|
	\le
	Ce^{\Phi(x)}\,,
	\qquad j=0,1,2,
\end{equation}
imposes a quantitative boundary closeness condition on \(f\) and
\(\widetilde f\), whose strength depends on the choice of \(\theta\). A typical example is obtained by choosing
\begin{equation}
	\label{eq:theta-log}
	\theta(t)=\frac1{\log t}\,.
\end{equation}
In this case, a simple minimization yields
\begin{equation}
	\label{eq:Phi-example}
	\Phi(x)
	\sim
	-\frac{4}{e}\,x^2 e^{\frac1{2x}},
	\qquad x\to0^+\,.
\end{equation}
Since \(x=-\log r\sim 1-r\) as \(r\to1^-\)\,, this corresponds to a boundary
closeness condition of the form
\begin{equation}
	\label{eq:boundary-closeness-example}
	|c(r)-\widetilde c(r)|
	\le
	C\exp\!\left(
	-\,\frac{4}{e}(1-r)^2
	e^{\frac{1}{2(1-r)}}
	\right),
	\qquad r\to1^-.
\end{equation}
If \(c\) and \(\widetilde c\) are \(C^\infty\) up to the boundary, such a
condition implies that their difference is flat at \(r=1\). In particular,
if the conformal factors belong to a quasi-analytic class, for instance if
they are real-analytic on \([0,1]\), then this already implies
$
c=\widetilde c
$
on \([0,1]\), and no further argument is required. However, the boundary estimate
\eqref{eq:boundary-closeness-example} should not be confused with a
quasi-analytic regularity assumption. Indeed, if \(\chi\in
C_c^\infty([0,1])\) satisfies \(\chi\equiv1\) in a neighborhood of
\(r=1\), then replacing \(c-\widetilde c\) by
\(\chi (c- \widetilde c)\) leaves the estimate
\eqref{eq:boundary-closeness-example} unchanged. The resulting functions vanish identically on a nonempty interior region and therefore cannot, in general, belong to a quasi--analytic class unless they vanish identically.

\par\noindent
A noteworthy feature of the present theorem is therefore that no
quasi-analytic regularity is assumed on the conformal factors. The
assumptions are compatible with conformal factors of finite regularity,
for instance \(C^m\), \(m\ge2\), where neither flatness propagation nor
quasi-analytic continuation can be invoked.

\medskip\noindent
Finally, the example \eqref{eq:theta-log} may be viewed as a borderline admissible
choice since
\begin{equation}
	\int_T^\infty \frac{dt}{t\log t}=+\infty\,,
	\qquad
	\int_T^\infty \frac{dt}{t(\log t)^{1+\varepsilon}}<\infty\,,
	\quad \varepsilon>0\,.
\end{equation}
Hence \(\theta(t)=1/\log t\) is, up to slowly varying factors, the fastest
decay compatible with the quasi--analyticity condition.

\begin{remark}
	While the conclusion of the theorem is an equality of global Dirichlet--to--Neumann maps, the global uniqueness result of \cite{DKN} then yields
	$
	g=\widetilde g.
	$
	However, the novelty of the theorem is not this uniqueness statement itself, but rather the propagation mechanism showing that equality of the local Dirichlet--to--Neumann maps on an arbitrary nonempty open subset \(\Gamma\subset K\) forces equality of the corresponding global Dirichlet--to--Neumann maps.
\end{remark}

\medskip\noindent
A distinctive feature of Theorem \ref{applicationsymetrique} is that the local-to-global
propagation does not stem from the usual unique continuation machinery
for partial differential equations. Instead, it is achieved through a
purely spectral argument combining Weyl--Titchmarsh asymptotics with a
quasi-analytic propagation theorem of Ingham's type. To the
best of our knowledge, this mechanism is new in the context of
Calder\'on-type inverse problems. 

\mn
The previous theorem relies on the fact that the transversal manifold
\(K\) is a compact orientable Riemannian symmetric space, and hence possesses an
intrinsic real-analytic structure.
Using a recent result of Bhowmik and Pradhan \cite{BP}, we show that the
same conclusion remains valid under the much weaker assumption that
\(K\) is merely a compact orientable quasi--analytic Riemannian manifold. 

\mn
Our next theorem provides the corresponding quasi--analytic analogue of
Theorem~\ref{applicationsymetrique}. For technical reasons, and in order to keep
the presentation as simple as possible, we restrict ourselves to the
particular class of weights
\begin{equation}
	\theta(t)=\frac{1}{(\log t)^\delta}\,,
	\qquad
	0<\delta\le 1\,.
\end{equation}

\begin{theorem}[Quasi-analytic case]\label{applicationquasianalytic}
	Let \((K,g_K)\) be a compact connected quasi--analytic Riemannian manifold,
	and let \(c,\widetilde c\in C^m([0,1])\)\,, \(m\ge2\), be positive
	functions. Consider the metrics
	\[
	g=c(r)^4(dr^2+r^2g_K)\,,
	\qquad
	\widetilde g=\widetilde c(r)^4(dr^2+r^2g_K)
	\]
	on \(M=(0,1]\times K\). Define
	\[
	f(x)=e^{-x/2}c(e^{-x}),
	\qquad
	\widetilde f(x)=e^{-x/2}\widetilde c(e^{-x})\,.
	\]
	For \(0<\delta\le1\), we introduce
	\[
	\Phi_\delta(x)
	=
	\inf_{t\ge T}
	\left(
	2xt-\frac{t}{(\log t)^\delta}
	\right)\,.
	\]
	Assume that there exist \(C>0\) and \(\varepsilon>0\) such that
	\[
	|f^{(j)}(x)-\widetilde f^{(j)}(x)|
	\le
	Ce^{\Phi_\delta(x)}\,,
	\qquad
	j=0,1,2,\quad x\in(0,\varepsilon]\,.
	\]
	Let \(\mathcal O\subset K\) be a nonempty open subset. If
	$
	\Lambda_{g,\mathcal O}
	=
	\Lambda_{\widetilde g,\mathcal O}\,,
	$
	then
	$
	\Lambda_g
	=
	\Lambda_{\widetilde g}\,.
	$
\end{theorem}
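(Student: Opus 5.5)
The plan is to reduce Theorem~\ref{applicationquasianalytic} to the abstract propagation result, Theorem~\ref{BPthm}. The argument mirrors the symmetric-space case, Theorem~\ref{applicationsymetrique}; only the final appeal changes, from Ganguly--Thangavelu to Bhowmik--Pradhan with the weight $\theta_\delta$.

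\textbf{Step 1: diagonalize and set up the Weyl--Titchmarsh comparison.} The estimate at $j=0$, $x\to0^+$ gives $f(0)=\widetilde f(0)$, that is $c(1)=\widetilde c(1)$. Hence, by separation of variables, $A=\Lambda_g-\Lambda_{\widetilde g}$ is a bounded operator with $A=\sum_k a_kP_k$ as in \eqref{decompfourier}. Since $P_kA=a_kP_k$, we get $\|P_kA\|=|a_k|$, so it suffices to prove
\[
|a_k|\le C e^{-\sqrt{\lambda_k}\,\theta_\delta(\sqrt{\lambda_k})}.
\]
In the cylinder variable $x$, the substitution $u=f\,v$ turns $-\Delta_g u=0$ on the $k$-th mode into a one-dimensional Schr\"odinger equation
\[
-v''+q(x)v=-\lambda_k v,\qquad q=\frac{f''}{f},
\]
on $[0,\infty)$. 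Following \cite{DKN}, $a_k$ is expressed, up to boundary terms involving $f(0)$ and $f'(0)$, through the difference of the Weyl--Titchmarsh functions $M(-\lambda_k)-\widetilde M(-\lambda_k)$. The boundary terms are equal for the two metrics because the hypothesis at $j=0,1$ forces $f(0)=\widetilde f(0)$ and $f'(0)=\widetilde f'(0)$.

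\textbf{Step 2: quantitative local Borg--Marchenko estimate.} Set $\rho=\sqrt{\lambda_k}$. I would use the Simon-type representation
\[
M(-\rho^2)-\widetilde M(-\rho^2)=\int_0^\varepsilon e^{-2\rho x}\bigl(A(x)-\widetilde A(x)\bigr)\,dx+O\bigl(e^{-(2\varepsilon-\eta)\rho}\bigr),
\]
or equivalently the Riccati/Volterra argument underlying the proof of \cite[Theorem~1.4]{DKN}. The task here is to bound $|A-\widetilde A|$ on $(0,\varepsilon]$ by a constant times $e^{\Phi_\delta(x)}$. This bound should follow from $|q-\widetilde q|\lesssim e^{\Phi_\delta}$, which holds because $q$ involves $f$, $f''$ and the positivity of $f$; this is where the $C^2$ closeness, i.e.\ $j=0,1,2$, is used. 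A Gronwall estimate on the Volterra equation relating $A-\widetilde A$ to $q-\widetilde q$ then transfers the weight, since $e^{\Phi_\delta}$ is monotone.

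\textbf{Step 3: Laplace estimate and conclusion.} Apply Lemma~\ref{LaplaceEstimate} with $\theta=\theta_\delta$. This weight is positive, decreasing, tends to $0$, and satisfies $\int^\infty \frac{dt}{t(\log t)^\delta}=\infty$ for $\delta\le1$. The lemma gives
\[
I(\rho)=\int_0^\varepsilon e^{-2\rho x}e^{\Phi_\delta(x)}\,dx\le e^{-\rho\theta_\delta(\rho)},\qquad \rho\ge T.
\]
The error term $e^{-c\rho}$ is smaller than this bound for large $\rho$. The finitely many $k$ with $\rho<T$ are absorbed into the constant $C$. Hence $\|P_kA\|\le Ce^{-\sqrt{\lambda_k}\theta_\delta(\sqrt{\lambda_k})}$ for all $k$.

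\textbf{Step 4: applying Theorem~\ref{BPthm}, and the main obstacle.} Here $M$ is singular at $r=0$, so I would check that the proof of Theorem~\ref{BPthm} uses only two facts: $A$ is bounded and self-adjoint on $L^2(K)$, and the local equality gives $\langle A\psi,\phi\rangle=0$ for $\psi,\phi\in C_c^\infty(\mathcal O)$. Both hold in the warped setting. Self-adjointness follows from the diagonal form with real $a_k$. Theorem~\ref{BPthm} then yields $\Lambda_g=\Lambda_{\widetilde g}$. The main obstacle is Step~2: obtaining the bound $|A-\widetilde A|\lesssim e^{\Phi_\delta}$ uniformly up to $x=0$, together with exponentially small tails beyond $\varepsilon$, in finite regularity $C^m$, $m\ge2$.
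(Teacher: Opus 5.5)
Your proposal is correct and is essentially the paper's proof, which is the proof of Theorem~\ref{applicationsymetrique} with Theorem~\ref{BPthm} in place of Theorem~\ref{GTthm}: boundary jets from $\Phi_\delta(x)\to-\infty$, diagonalization, a Laplace-transform representation of $M-\widetilde M$ whose integrand is $O(e^{\Phi_\delta})$ on $(0,\varepsilon)$, Lemma~\ref{LaplaceEstimate} plus an exponentially small tail, and finally Bhowmik--Pradhan propagation combined with self-adjointness. The differences are minor. The paper uses the linear identity of \cite[Lemma~4.5]{DKN}, in which $B$ is the identity plus a Volterra operator with bounded kernel acting on $q_{\widetilde f}-q_f$, so your Gronwall step for Simon's $A$-amplitude becomes a one-line triangle inequality using that $\Phi_\delta$ is increasing. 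Also, for general $d\ge3$ the Liouville substitution is $v=f^{d-2}u$, not $u=fv$, with integrable potential $q_f=(f^{d-2})''/f^{d-2}-(d-2)^2/4$ and spectral parameter $-\rho_k^2$, where $\rho_k=\sqrt{\lambda_k+(d-2)^2/4}\ge\sqrt{\lambda_k}$; this is harmless because $I(\rho)$ decreases in $\rho$.
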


\begin{remark}
	A straightforward minimization shows that
	\[
	\Phi_\delta(x)
	\sim
	-\delta(2x)^{1+1/\delta}
	\exp\!\left((2x)^{-1/\delta}\right)\,,
	\qquad x\to0^+\,.
	\]
	Consequently, the assumptions of
	Theorem~\ref{applicationquasianalytic} require the conformal factors
	to be extremely close near the boundary \(r=1\), although this
	closeness is substantially weaker than coincidence in a collar
	neighborhood.
\end{remark}

\mn 
Beyond the warped product setting, the present work suggests that
Weyl--Titchmarsh theory may serve as a bridge between geometric
boundary closeness conditions and abstract quasi-analytic propagation
principles. It would be interesting to investigate whether similar
mechanisms exist in more general geometric settings.

\medskip\noindent
The paper is organized as follows. Section~2 is devoted to the proof of
the geometric local-to-global propagation theorem. In Section~3, we
prove the exponential spectral propagation theorem using the spectral
unique continuation theorem of Jerison--Lebeau in the formulation of
Le~Rousseau--Lebeau. Section~4 establishes the quasi--analytic
propagation principle underlying the proofs of
Theorems~\ref{GTthm} and~\ref{BPthm}. The remaining sections introduce
the warped product framework together with the associated
Weyl--Titchmarsh theory, establish the required quantitative spectral
decay estimates, and combine them with the abstract propagation results
to obtain the corresponding local-to-global uniqueness theorems.


\section{A geometric propagation argument}

In this section, we prove Theorem~\ref{firstmainresult}. We retain the
notation and assumptions introduced there. The geometric hypothesis that
the two metrics coincide in a collar neighborhood of the boundary
component \(\Gamma\) is illustrated in Figure~\ref{fig:collar}.

\begin{figure}[h]
	\centering
	\includegraphics[width=0.55\textwidth]{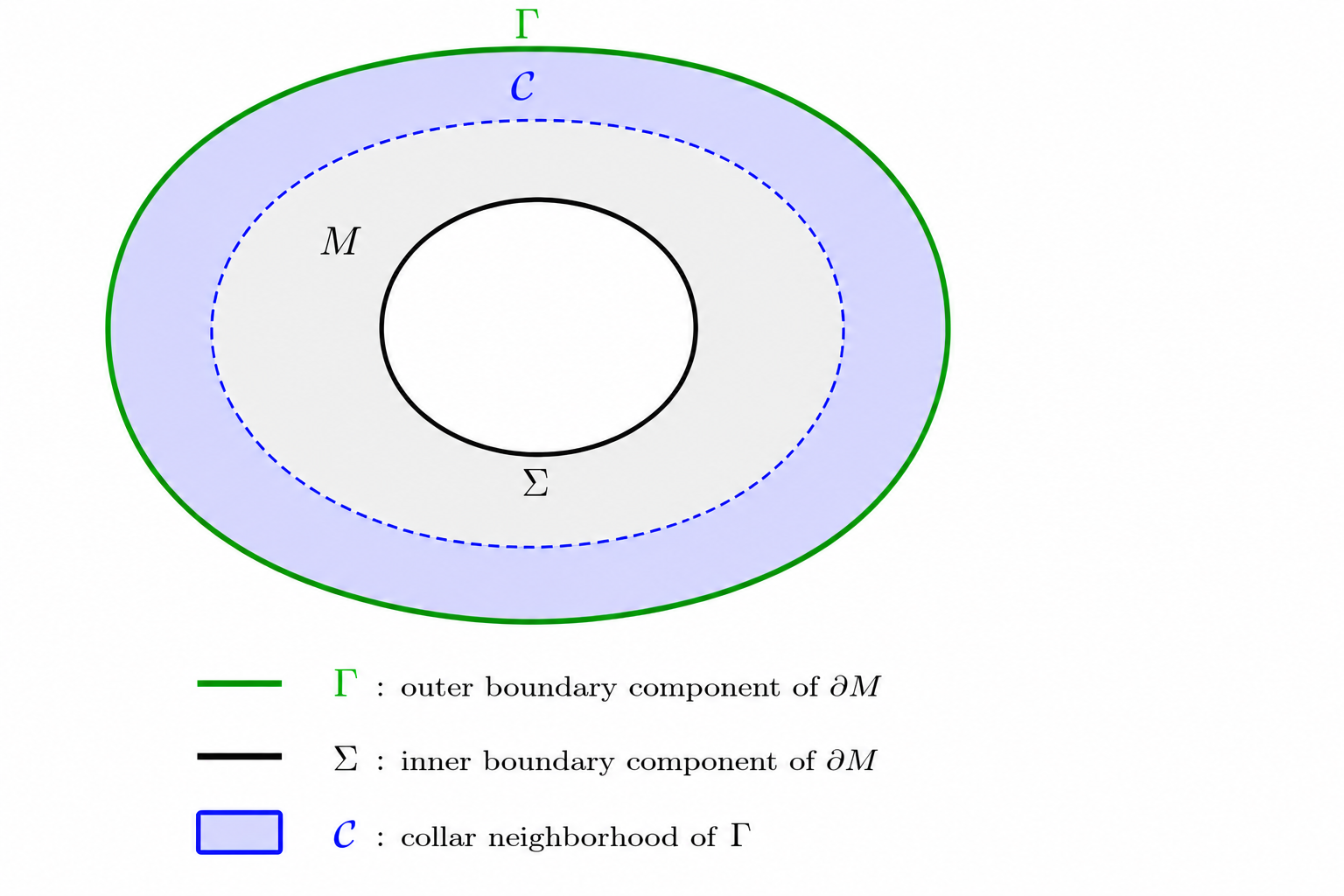}
	\caption{A collar neighborhood \(\mathcal C\) of the boundary component \(\Gamma\).}
	\label{fig:collar}
\end{figure}

\mn
\begin{proof}
Since \(g=\widetilde g\) in the collar neighborhood \(\mathcal C\), we denote their common value by
\[
g_0:=g=\widetilde g
\text{ in }\mathcal C\,.
\]
Consequently, \(g\) and \(\widetilde g\) induce the same boundary metric, boundary measure, and outward unit normal vector field on \(\Gamma\). We denote by
\(
\langle\cdot,\cdot\rangle
\)
the associated \(L^2(\Gamma)\) inner product. Define
\begin{equation}
	\label{eq:A}
	A:=\Lambda_{g,\Gamma}-\Lambda_{\widetilde g,\Gamma}\,.
\end{equation}
Since both Dirichlet--to--Neumann maps are self-adjoint with respect to
\(
\langle\cdot,\cdot\rangle\,,
\)
the operator \(A\) is self-adjoint on \(L^2(\Gamma)\)\,.
Let
\(
f\in C_c^\infty(\mathcal{O})\,.
\)
Let \(u\) and \(\widetilde u\) solve
\begin{equation}
	\label{eq:Dirichlet-problem}
	\begin{cases}
		-\Delta_g u=0 & \text{ in } M\,,\\
		u=f & \text{on } \Gamma\,,\\
		u=0 & \text{on } \partial M\setminus \Gamma\,,
	\end{cases}
	\qquad
	\begin{cases}
		-\Delta_{\widetilde g}\widetilde u=0 & \text{ in } M\,,\\
		\widetilde u=f & \text{on } \Gamma\,,\\
		\widetilde u=0 & \text{on } \partial M\setminus \Gamma\,.
	\end{cases}
\end{equation}
Define
$
w:=u-\widetilde u\,.
$
Since \(u\) and \(\widetilde u\) are harmonic with respect to the same metric \(g_0\) in the collar neighborhood \(\mathcal C\), it follows that
\begin{equation}
	\label{eq:w-harmonic}
	-\Delta_{g_0}w=0
	\text{ in }\mathcal C\,.
\end{equation}
Moreover, $ w|_\Gamma=0$. The local DN equality gives
\begin{equation}
	\label{eq:normal-derivative}
	\partial_\nu w=0
	 \text{ on } \mathcal{O}\,.
\end{equation}
By boundary unique continuation for elliptic equations (see \cite{Ho}, Section~28 or  \cite{SaloUCP}), it follows that
$ 	w\equiv0$ in $\mathcal C$. In particular,
\begin{equation}
	\label{eq:Af-global}
	Af=\partial_\nu w|_\Gamma=0
	 \text{ on } \Gamma\,.
\end{equation}
We have therefore shown that
\begin{equation}
	\label{eq:Af-omega}
	Af=0
	 \text{ on } \Gamma\,,
	\qquad f\in C_c^\infty(\mathcal{O})\,.
\end{equation}
We now use self-adjointness. Let \(F\in C^\infty(\Gamma)\). For every
\(h\in C_c^\infty(\mathcal{O})\), we have
\begin{equation}
	\label{eq:selfadjointness}
	\langle AF,h\rangle
	=
	\langle F,Ah\rangle
	=
	0\,.
\end{equation}
Hence \(AF=0\) on \(\mathcal O\). Repeating the preceding boundary
unique-continuation argument with boundary datum \(F\), we obtain
\(AF=0\) on \(\Gamma\). Since \(F\in C^\infty(\Gamma)\) was arbitrary,
this proves \(A=0\), that is, $\Lambda_{g,\Gamma}
=
\Lambda_{\widetilde g,\Gamma}$\,.

\end{proof}


\section{An exponential spectral propagation argument}

In this section, we prove Theorem~\ref{expthm}. The geometric collar
assumption is now replaced by an exponential decay condition on the
spectral components of the difference of the Dirichlet--to--Neumann maps.
We shall use the exponential spectral unique continuation theorem of
Jerison--Lebeau~\cite{JL}. We also refer to the interpolation
inequality of Lebeau--Zuazua~\cite[(5.6)]{LZ}; see also
Lebeau--Robbiano~\cite{LR} and
Le~Rousseau--Lebeau~\cite[Proposition~5.6]{LL}. We recall that
\begin{equation}
\label{Lap-spectrum}
0=\lambda_0<\lambda_1<\lambda_2<\cdots
\end{equation}
are the distinct eigenvalues of the Laplace--Beltrami operator on \(K\)\,, and
\begin{equation}
\label{spectral-proj}
P_k:L^2(K)\longrightarrow E_{\lambda_k}
\end{equation}
is the associated spectral projection. Assume that
\[
g=\widetilde g
\text{ on }K=\partial M\,,
\]
and set
\begin{equation}
\label{A-def}
A=\Lambda_g-\Lambda_{\widetilde g}\,.
\end{equation}
Since
$
\Lambda_{g,\mathcal O}
=
\Lambda_{\widetilde g,\mathcal O}\,,
$
we have, for every
\(
\psi\in C_c^\infty(\mathcal O),
\)
\begin{equation}
\label{Apsi-local}
A\psi=0
\text{ on }\mathcal O\,.
\end{equation}
Assume moreover that there exist constants \(C>0\) and
\(\varepsilon>0\) such that
\begin{equation}
\label{exp-decay}
\|P_kA\|_{\mathcal L(L^2(K))}
\le
Ce^{-\varepsilon\sqrt{\lambda_k}},
\qquad k\ge0\,.
\end{equation}
Then
\begin{equation}
\label{exp-decay-Apsi}
\|P_k(A\psi)\|_{L^2(K)}
\le
Ce^{-\varepsilon\sqrt{\lambda_k}}
\|\psi\|_{L^2(K)}\,,
\end{equation}
so that the spectral components of \(A\psi\) satisfy a fixed
exponential decay. Since \(A\psi=0\) on \(\mathcal O\)\,, all the
assumptions of Proposition~5.6 in
Le~Rousseau--Lebeau~\cite{LL} are fulfilled. Therefore,
\begin{equation}
\label{Apsi-zero}
A\psi=0
\text{ on }K\,.
\end{equation}
Now, since \(A\) is self-adjoint, the same argument as in the proof of
Theorem~\ref{firstmainresult} yields
\[
\langle Au,\psi\rangle
=
\langle u,A\psi\rangle
=
0\,,
\]
for every \(u\in C^\infty(K)\) and every
\(\psi\in C_c^\infty(\mathcal O)\)\,. Hence
\[
Au=0
\text{ on }\mathcal O.
\]
Moreover, by \eqref{exp-decay},
\begin{equation}
\label{exp-decay-Au}
\|P_k(Au)\|_{L^2(K)}
\le
Ce^{-\varepsilon\sqrt{\lambda_k}}
\|u\|_{L^2(K)}\,.
\end{equation}
Proposition~5.6 of \cite{LL} applies once again and yields
\begin{equation}
\label{Au-zero}
Au=0
\text{ on }K\,.
\end{equation}
Since this holds for every
\(
u\in C^\infty(K)\,,
\)
we conclude that
$
\Lambda_g=\Lambda_{\widetilde g}\,.
$
This completes the proof.


\section{The quasi--analytic propagation principle}

In this section, we prove Theorems~\ref{GTthm} and
\ref{BPthm}. The common idea is to replace the exponential spectral
propagation used in Theorem~\ref{expthm} by an Ingham-type
quasi--analytic propagation theorem. We first treat the case where the boundary \(K\) is a compact
Riemannian symmetric space, relying on the theorem of
Ganguly--Thangavelu. We then turn to compact quasi--analytic
Riemannian manifolds, using the recent propagation result of
Bhowmik--Pradhan.

\subsection{Propagation on symmetric spaces}

In this subsection, we assume that \(K\) is a compact connected
Riemannian symmetric space. Typical examples include
\begin{equation}
K=\mathbb S^d\,,\qquad
K=\mathbb T^d\,,\qquad
K=\mathbb{RP}^d\,,
\end{equation}
more generally compact rank-one symmetric spaces and finite products of
such spaces. We recall that
\[
0=\lambda_0<\lambda_1<\lambda_2<\cdots
\]
denote the distinct eigenvalues of the Laplace--Beltrami operator on
\(K\)\,, and
\[
P_k:L^2(K)\longrightarrow E_{\lambda_k}
\]
the corresponding spectral projections.

\mn
The following propagation theorem is Theorem~1.3 of Ganguly and
Thangavelu~\cite{GT}.

\begin{proposition}\label{quasi}
	Let \(\mathcal O\subset K\) be a nonempty open subset and let
	\(F\in L^2(K)\). Assume that $F=0$ on $\mathcal O$. Suppose that there exists a positive decreasing function
	\[
	\theta:[T,+\infty)\rightarrow(0,+\infty)\,,
	\]
	such that
	\[
	\theta(t)\to0
	\quad\text{as }t\to+\infty\,,
	\qquad
	\int_T^\infty\frac{\theta(t)}{t}\,dt=+\infty\,.
	\]
Assume moreover that
\[
\|P_kF\|_{L^2(K)}
\le
Ce^{-\sqrt{\lambda_k}\theta(\sqrt{\lambda_k})}\,,
\qquad k\ge0\,.
\]
Then
$ F\equiv0$ on $K$.
\end{proposition}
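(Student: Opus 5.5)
This is Theorem~1.3 of Ganguly--Thangavelu \cite{GT}, so in the paper it suffices to quote it. Still, I would reconstruct it as follows: reduce to the classical one-dimensional Ingham theorem \cite{Ingham1934,Koosis} via the wave group on \(K\), then use the real-analyticity of eigenfunctions on the symmetric space.

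\emph{Step 1: reduction to a one-variable function.} Fix a point \(x_0\in\mathcal O\) and a small ball \(B(x_0,2r)\subset\mathcal O\). For a test function \(\varphi\in C_c^\infty(B(x_0,r))\), set
\[
G_\varphi(t)=\bigl\langle \cos\bigl(t\sqrt{-\Delta_{g_K}}\bigr)F,\varphi\bigr\rangle
=\sum_{k\ge0}\cos\bigl(t\sqrt{\lambda_k}\bigr)\,\langle P_kF,\varphi\rangle .
\]
Finite propagation speed for the wave equation, together with \(F=0\) on \(\mathcal O\), gives \(G_\varphi(t)=0\) for \(|t|<r\).

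\emph{Step 2: the spectral measure of \(G_\varphi\).} The distributional Fourier transform of \(G_\varphi\) is a sum of point masses at \(\pm\sqrt{\lambda_k}\). The weights are bounded by \(\|P_kF\|\,\|\varphi\|\le Ce^{-\sqrt{\lambda_k}\theta(\sqrt{\lambda_k})}\). Two technical adjustments are needed here:
\begin{itemize}
\item Replace \(\theta\) by a slightly smaller admissible weight if needed, so that the polynomial growth of multiplicities and the counting of frequencies (Weyl's law) is absorbed.
\item Convolve \(G_\varphi\) in \(t\) with a compactly supported bump of small support. This produces an \(L^1\) function that still vanishes on a smaller interval and whose Fourier transform keeps the Ingham decay.
\end{itemize}

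\emph{Step 3: Ingham's theorem and conclusion.} By Ingham's theorem, a function on \(\mathbb R\) whose Fourier transform is bounded by \(e^{-|\xi|\theta(|\xi|)}\), with \(\int^\infty\theta(t)t^{-1}\,dt=\infty\), and which vanishes on an interval, vanishes identically. Hence \(G_\varphi\equiv0\) on \(\mathbb R\). Since the frequencies \(\sqrt{\lambda_k}\) are distinct, uniqueness of coefficients of almost periodic functions (take means of \(G_\varphi(t)\cos(t\sqrt{\lambda_k})\)) gives \(\langle P_kF,\varphi\rangle=0\) for every \(k\). As \(\varphi\) was arbitrary, each eigenfunction component \(P_kF\) vanishes on \(B(x_0,r)\). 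On a compact symmetric space the metric is real-analytic, so eigenfunctions are real-analytic, and connectedness gives \(P_kF\equiv0\) for all \(k\). Therefore \(F=0\).

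The main obstacle is Step~2, where the sharp Ingham hypothesis has to survive two operations. First, the passage from an almost periodic \(G_\varphi\) (not integrable) to an \(L^1\) function to which Ingham's theorem applies must not destroy the borderline decay \(e^{-t\theta(t)}\). Second, the multiplicities \(d_k\) must be absorbed without breaking the divergence \(\int\theta/t=\infty\). The first thing I would try is to keep the decay factor \(e^{-t\theta(t)}\) intact and only modify the weight by an \(O(\log t/t)\) term, checking that the resulting \(\theta\) is still admissible. Alternatively, one can work directly with the Denjoy--Carleman/Ingham quasi-analytic class characterization of \(G_\varphi\), and conclude from the vanishing of all derivatives at a point inside the interval of vanishing.
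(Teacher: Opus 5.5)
Your first sentence matches the paper exactly: the paper gives no argument and simply quotes Theorem~1.3 of Ganguly--Thangavelu. Your reconstruction is a different, Chernoff-type route (finite propagation speed, reduction to the one-dimensional Ingham theorem, Bohr coefficients, unique continuation for eigenfunctions). Its overall structure is sound, but Step~2 fails as written.

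\textbf{The failing step.} Convolving $G_\varphi$ in $t$ with a compactly supported bump gives another uniformly almost periodic function. Such a function is not in $L^1(\mathbb R)$ unless it vanishes identically, since a nonzero almost periodic $h$ has $|h|$ of positive mean. Its Fourier transform is also still a sum of Dirac masses. So Ingham's theorem, which concerns integrable functions with a pointwise bound on $\widehat h$, cannot be applied to it.

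\textbf{How to repair it.} Multiply instead of convolving. Take $\eta\in\mathcal S(\mathbb R)$, $\eta\not\equiv0$, with $\widehat\eta$ supported in $(-1,1)$.
\begin{enumerate}
\item With $a_k=\langle P_kF,\varphi\rangle$, the product $\eta G_\varphi$ lies in $L^1(\mathbb R)$ and vanishes on $(-r,r)$.
\item Its transform is $\widehat{\eta G_\varphi}(\xi)=\tfrac12\sum_k a_k\bigl(\widehat\eta(\xi-\sqrt{\lambda_k})+\widehat\eta(\xi+\sqrt{\lambda_k})\bigr)$. For each $\xi$ this involves only the $k$ with $\bigl|\sqrt{\lambda_k}-|\xi|\bigr|<1$.
\item Multiplicities never enter, because each $a_k$ already lumps a whole eigenspace. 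The polynomially many distinct $\sqrt{\lambda_k}$ in such a window are absorbed by the bound $|a_k|\le\|P_kF\|\,\|P_k\varphi\|$, using $\|P_k\varphi\|=O(\lambda_k^{-N})$ for every $N$.
\item The same bound gives $\sum_k|a_k|\le\|F\|\,\|\varphi\|$. You need this anyway for $G_\varphi$ to be a uniformly convergent almost periodic series.
\item You obtain $|\widehat{\eta G_\varphi}(\xi)|\le Ce^{-|\xi|\vartheta(|\xi|)}$ with $\vartheta(t)=\tfrac12\theta(t+1)$. This weight is again positive, decreasing, tends to $0$, and satisfies $\int^\infty\vartheta(t)\,t^{-1}\,dt=\infty$.
\item Ingham's theorem gives $\eta G_\varphi\equiv0$. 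Since $\eta$ is entire, its zeros are isolated, so $G_\varphi\equiv0$, and your Step~3 then goes through.
\end{enumerate}

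\textbf{Two cautions.} Do not use your proposed modification $\theta-A\log t/t$. A merely decreasing admissible $\theta$ can have $t\theta(t)$ drop to $\log t$ along a sparse sequence, for example $\theta=\log t_j/t_j$ on $[t_j,e^{t_j})$. The modification can then destroy positivity and monotonicity. For the same reason, your Denjoy--Carleman alternative does not work in the stated generality. The moments $\sup_t t^ne^{-t\theta(t)}$ that control the derivatives of $G_\varphi$ can be infinite for such $\theta$, so no quasi-analytic class is produced. That route is fine for regular weights such as $\theta_\delta$.

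\textbf{What the repaired route buys.} Nothing in it uses the symmetric structure of $K$. Finite propagation speed, polynomial bounds on the eigenvalue counting function, and unique continuation for Laplace eigenfunctions hold on every compact connected Riemannian manifold. For the last of these, Aronszajn's theorem suffices, so real-analyticity is not needed. Hence, modulo the one-dimensional Ingham theorem in the form you quote, your argument proves the proposition for arbitrary smooth $K$. This bears on the question raised in the introduction of whether the symmetric-space or quasi-analytic assumptions are needed for propagation from open sets. Quoting Ganguly--Thangavelu is shorter; your repaired route is more general.
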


\mn 
With Proposition~\ref{quasi} at hand, the proof of
Theorem~\ref{GTthm} is identical to that of
Theorem~\ref{expthm}: one simply replaces the exponential propagation
result of Le~Rousseau--Lebeau by Proposition~\ref{quasi}. We omit the
details.

\subsection{Propagation on quasi--analytic manifolds}

We now turn to the more general setting where \(K\) is a compact
quasi--analytic Riemannian manifold. In contrast with the symmetric
space case, the proof relies on the recent work of
Bhowmik and Pradhan \cite{BP}, which extends the quasi--analytic
propagation theorem of Ganguly and Thangavelu to arbitrary compact
quasi--analytic manifolds.

\mn 
We begin by recalling the classical notion of a
Denjoy--Carleman class on a connected open set \(\Omega\subset\mathbb R^n\). Let
$
M=(M_k)_{k\ge0}
$
be an increasing logarithmically convex sequence of positive real
numbers satisfying $M_0=1$. The associated Denjoy--Carleman class \(C_M(\Omega)\) consists of all
functions \(f\in C^\infty(\Omega)\) such that, for every compact subset
\(X\subset\Omega\)\,, there exist positive constants \(C\) and \(h\)
satisfying
\begin{equation}\label{DC-estimate}
	\sup_{x\in X}
	|\partial^\alpha f(x)|
	\le
	Ch^{|\alpha|}
	M_{|\alpha|}\,,
	\qquad
	\alpha\in\mathbb N_0^n\,.
\end{equation}
The class \(C_M(\Omega)\) is said to be
\emph{quasi--analytic} if every function
\(f\in C_M(\Omega)\) whose derivatives of all orders vanish at one
point is necessarily identically zero in 
\(\Omega\). The celebrated Denjoy--Carleman theorem \cite{Rudin} asserts that
\(C_M(\Omega)\) is quasi--analytic if and only if
\begin{equation}\label{DC-condition}
	\sum_{k=1}^{\infty}
	\frac{M_{k-1}}{M_k}
	=
	+\infty\,.
\end{equation}
An important example is the real--analytic class, which is obtained by choosing $M_k=k!$.

\mn 
We assume from now on that \(M=(M_k)_{k\ge0}\) is a quasi-analytic
weight sequence associated with a regular sequence
$
m_k=\frac{M_k}{k!}
$
in the sense of Bhowmik and Pradhan~\cite{BP}. A compact Riemannian manifold \(K\) is called
\emph{quasi--analytic} if it admits an atlas whose transition maps
belong locally to the Denjoy--Carleman class \(C_M\), and whose metric
coefficients belong locally to the same class. Following \cite{BP}, we introduce the global Denjoy--Carleman class
associated with the Laplace--Beltrami operator. This definition is
intrinsic and therefore does not depend on the choice of local
coordinates:
\begin{equation}\label{CM}
	C_M(K)
	=
	\left\{
	f\in C^\infty(K):
	\|(-\Delta_K)^k f\|_{L^2(K)}
	\le
	M_{2k}\,\|f\|_{L^2(K)}\,,
	\quad k\ge0
	\right\}\,.
\end{equation}
When \(K\) is a real--analytic manifold and \(M_k=k!\), this definition
coincides with the usual class of real--analytic functions (see
\cite[Remark~1.10]{BP}). The following proposition, which is Corollary~1.13 of
Bhowmik and Pradhan~\cite{BP}, is the only property of the class
\(C_M(K)\) that we shall use.

\begin{proposition}\label{BP-cor}
	Let \(K\) be a compact connected quasi--analytic Riemannian manifold.
	If \(f\in C_M(K)\) vanishes on a measurable subset of \(K\) of positive
	measure, then \(f\equiv0\)\,.
\end{proposition}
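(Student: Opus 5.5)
The plan is to reduce Proposition~\ref{BP-cor} to the classical local Denjoy--Carleman theorem~\eqref{DC-condition}, in three steps.

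\emph{Step 1: from global to local regularity.} We first show that a function \(f\in C_M(K)\), defined through the iterated Laplacian bounds~\eqref{CM}, belongs locally to the Denjoy--Carleman class \(C_M\) in every quasi--analytic chart. That is, on each compact subset \(X\) of a chart domain we want
\[
\sup_X|\partial^\alpha f|\le C h^{|\alpha|}M_{|\alpha|}\,.
\]
This is a Kotake--Narasimhan type statement for the elliptic operator \(-\Delta_K\), whose coefficients lie in \(C_M\). We would proceed as follows.
\begin{itemize}
\item Apply Sobolev embedding to control \(\partial^\alpha f\) by the norms \(\|(-\Delta_K)^k f\|_{L^2}\) with \(2k\approx|\alpha|+n\).
\item Use iterated interior elliptic estimates on a shrinking family of nested neighborhoods, in the style of Kotake--Narasimhan and Lions--Magenes.
\item Keep the constants under control by writing the weight as \(M_k=k!\,m_k\) and using the regularity of \(m_k\) (log-convexity and a moderate growth condition of the type \(m_{k+1}\le C^k m_k\)). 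These properties allow the commutators with the \(C_M\) coefficients to be absorbed into a geometric factor \(h^{|\alpha|}\).
\end{itemize}
I expect this step to be the main obstacle. The bookkeeping of constants in the iterated elliptic estimates, with coefficients that are only of class \(C_M\) rather than analytic, is precisely where the regularity hypotheses on \((m_k)\) from \cite{BP} must enter.

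\emph{Step 2: finding a flat point.} Let \(E\subset K\) have positive measure with \(f|_E=0\). Work in a chart meeting \(E\) in a set of positive measure.
\begin{itemize}
\item At every Lebesgue density point \(x_0\) of \(E\), the approximate differential of \(f\) along \(E\) vanishes. Since \(f\) is \(C^1\), the approximate differential agrees with the genuine differential, so \(\nabla f(x_0)=0\).
\item Hence each first derivative \(\partial_j f\) vanishes on a subset \(E_1\subset E\) of full measure in \(E\).
\item Apply the same argument to the finitely many functions \(\partial_j f\) on \(E_1\), and iterate.
\item After countably many steps we have discarded only a null set. Thus there is a point \(x_0\in\bigcap_\ell E_\ell\) at which \(\partial^\alpha f(x_0)=0\) for every multi-index \(\alpha\).
\end{itemize}

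\emph{Step 3: propagation.} By Step~1, \(f\) lies in the local class \(C_M\) on the chart domain, which we may take connected. By the quasi-analyticity~\eqref{DC-condition} of this class, \(f\equiv0\) near \(x_0\).

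Now let \(Z\) be the set of points of \(K\) in a neighborhood of which \(f\) vanishes identically. It is open by definition and nonempty by the above. It is also closed: if \(y\) lies in the closure of \(Z\), then all derivatives of \(f\) vanish at \(y\) by continuity, and the same chart argument gives \(f\equiv0\) near \(y\). The transition maps are of class \(C_M\), and the log-convex class is stable under composition, so the local classes are consistent across charts. Since \(K\) is connected, \(Z=K\), and therefore \(f\equiv0\).
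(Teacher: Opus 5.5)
The paper gives no proof of this statement; it imports it verbatim as Corollary~1.13 of \cite{BP}. Your proposal is a reconstruction, with a sound outline, in three parts. Step~1 is a Kotake--Narasimhan type theorem turning the global iterate bounds \eqref{CM} into local Denjoy--Carleman bounds in each chart of the $C_M$ atlas. Step~2 produces a flat point from Lebesgue density. Step~3 applies the Denjoy--Carleman theorem and an open--closed argument.

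Steps~2 and~3 are correct and essentially complete. If $x_0\in E$ is a density point and $v=\nabla f(x_0)\neq0$, the cap $\{x\in B(x_0,r):v\cdot(x-x_0)\ge|v|r/2\}$ has measure $\ge c_nr^n$ and misses $E$ for small $r$. Discarding countably many null sets then leaves a point where the whole jet of $f$ vanishes. The composition remark in Step~3 is unnecessary, since you can apply Step~1 directly in a chart of the atlas around each point. If you keep it, note that stability under composition comes from log-convexity of $m_k=M_k/k!$, not of $M_k$.

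Compared with the citation, your argument shows that the only analytic input is local $C_M$-regularity from iterates of $-\Delta_K$; after that, the positive-measure conclusion is soft. However, Step~1 carries all the weight and, as written, is a plan rather than a proof. It is a known type of result: Kotake--Narasimhan in the analytic case, and theorems of Lions--Magenes type for Denjoy--Carleman classes. It genuinely uses the regularity of $(m_k)$ in three places:
\begin{itemize}
\item log-convexity of $m_k$, to bound the Leibniz sums $\sum_j\binom{k}{j}M_jM_{k-j}$ by $C^kM_k$ when derivatives fall on the coefficients $g^{ij},\sqrt{|g|}$ of $\Delta_K$; the membership of these coefficients in $C_M$ itself needs closedness of the class under inversion and square roots;
\item $M_k\ge k!$ up to geometric factors, to absorb the losses of order $(k/d)^k$ from cut-offs on $k$ nested domains;
\item moderate growth, to replace the Sobolev-shifted $M_{|\alpha|+n}$ by $C^{|\alpha|}M_{|\alpha|}$.
\end{itemize}
To close the argument, either cite such a theorem under exactly the regularity hypotheses of \cite{BP}, or carry out this bookkeeping. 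This is precisely the nontrivial content that the paper delegates to \cite{BP}.
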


\begin{proof}[Proof of Theorem~\ref{BPthm}]
	We adapt the proof of \cite[Theorem~1.14]{BP} to our setting. Set
	$
	A=\Lambda_g-\Lambda_{\widetilde g}\,.
	$
	Let \(\psi\in C^\infty_c(\mathcal O)\) and put
	$
	F=A\psi\,.
	$
	Since the local Dirichlet--to--Neumann maps coincide, we have
	$
	F=0
	$
	on \(\mathcal O\)\,. By the spectral decay assumption of Theorem~\ref{BPthm},
	\begin{equation}\label{weighted-spectral-sum}
		\sum_{k\ge0}
		\|P_kF\|_{L^2(K)}^2
		\exp\!\left(
		\sqrt{\lambda_k}\,
		\theta_\delta(\sqrt{\lambda_k})
		\right)
		\le C\|F\|_{L^2(K)}^2\,.
	\end{equation}
	Define
	\begin{equation}\label{weight-W}
		W(t)=
		\exp\!\left(
		\frac{t}{2\log^\delta(e+t)}
		\right)\,,
		\qquad t\ge0\,.
	\end{equation}
	Then \eqref{weighted-spectral-sum} becomes
	\begin{equation}\label{weighted-spectral-sum-W}
		\sum_{k\ge0}
		\|P_kF\|_{L^2(K)}^2
		W(\sqrt{\lambda_k})^2
		\le C\|F\|_{L^2(K)}^2\,.
	\end{equation}
	As in \cite[Theorem~1.14]{BP}, one can show that
	\[
	\sup_{t\ge0}\frac{t^k}{W(t)}
	\le
	C^k k!\,(\log (e+k))^{\delta k}\,,
	\]
	where \(C>0\) is independent of \(k\)\,. Let
	\[
	M_k=C^k k!\,(\log (e+k))^{\delta k}\,,
	\qquad k\ge0\,.
	\]
Then, by Plancherel's identity and \eqref{weighted-spectral-sum-W},
\begin{equation}\label{Lp-estimate}
	\begin{aligned}
		\|(-\Delta_K)^pF\|_{L^2(K)}^2
		&=
		\sum_{k\ge0}
		\lambda_k^{2p}
		\|P_kF\|_{L^2(K)}^2\\
		&=
		\sum_{k\ge0}
		\frac{\lambda_k^{2p}}
		{W(\sqrt{\lambda_k})^2}
		W(\sqrt{\lambda_k})^2
		\|P_kF\|_{L^2(K)}^2\\
		&\le
		\left(
		\sup_{t\ge0}
		\frac{t^{4p}}{W(t)^2}
		\right)
		\sum_{k\ge0}
		W(\sqrt{\lambda_k})^2
		\|P_kF\|_{L^2(K)}^2\\
		&\le
		C\,M_{2p}^2\,
		\|F\|_{L^2(K)}^2\,.
	\end{aligned}
\end{equation}
Consequently,
\[
\|(-\Delta_K)^pF\|_{L^2(K)}
\le
C\,M_{2p}\,
\|F\|_{L^2(K)}\,.
\]
Hence
\[
F\in C_{M'}(K)\,,
\]
where \(M'=(C\,M_{2p})_{p\ge0}\)\,.\\
By \cite[Proof of Theorem~1.14]{BP}, the sequence
\(M'=(CM_{2p})_{p\ge0}\) is again a regular quasi-analytic weight
sequence. Therefore, Corollary~1.13 of Bhowmik and Pradhan applies to
the class \(C_{M'}(K)\)\,. Since \(F\in C_{M'}(K)\) and \(F\) vanishes on
the nonempty open set \(\mathcal O\)\,, and hence on a measurable subset
of positive measure, we conclude that
$F\equiv0$ on $K$. The remainder of the proof is identical to that of
Theorem~\ref{expthm}. Using the self-adjointness of \(A\), one first
obtains \(Au=0\) on \(\mathcal O\) for every \(u\in C^\infty(K)\).
Applying the preceding argument to \(F=Au\) then yields \(Au=0\) on
\(K\)\,. Hence \(A=0\)\,, that is,
$
\Lambda_g=\Lambda_{\widetilde g}\,.
$
\end{proof}


\section{Geometric applications to warped product metrics}

In this section, we investigate the warped product model introduced in
the introduction. The purpose is to translate geometric boundary
closeness conditions on the conformal factors into the spectral
hypotheses appearing in the abstract propagation theorems of
Section~4. This is achieved by combining the diagonalization of the
Dirichlet--to--Neumann map with Weyl--Titchmarsh theory. Besides
yielding the geometric applications announced in the introduction,
this approach illustrates the bridge between geometry and
quasi-analytic propagation developed in the present paper.

\subsection{Geometric framework}

Throughout this section, we consider a class of $d$-dimensional manifolds with boundary of the form
\begin{equation}
	\label{eq:manifold}
	M=(0,1]\times K\,,
\end{equation}
where $d\geq 3$ and where \(K\) is a compact, connected and orientable
\((d-1)\)-dimensional Riemannian manifold. The manifold \(M\) has a single boundary component,
\[
\partial M=\{1\}\times K\,,
\]
which will be identified with \(K\)\,.

\medskip\noindent
We equip $M$ with a warped product metric of the form
\begin{equation}\label{Metricbis}
	g=c(r)^4\big(dr^2+r^2 g_K\big)\,,
\end{equation}
where \(g_K\) denotes the Riemannian metric on \(K\)\,, and where
\[
c:[0,1]\longrightarrow (0,\infty)
\]
is a positive function of class $C^m$, with $m\ge 2$. 

\medskip\noindent
As explained in the introduction, the geometric setting described above allows for both regular and singular Riemannian structures. 
\medskip\noindent
For the analysis of the Dirichlet--to--Neumann map, it is convenient to introduce the logarithmic variable
\[
x=-\log r,
\qquad x\in [0,+\infty)\,.
\]
In these coordinates, the manifold acquires the cylindrical representation
\begin{equation}\label{Metricx}
	g=f(x)^4\bigl(dx^2+g_K\bigr)\,,
\end{equation}
where
\[
f(x)=c(e^{-x})\,e^{-x/2}\,.
\]
Since $c$ admits a Taylor expansion of order $m$ at the origin, one obtains the asymptotic behavior
\begin{equation}\label{f}
	f(x)
	=
	e^{-x/2}
	+\sum_{k=1}^{m} c_k e^{-(k+\frac12)x}
	+o\!\left(e^{-(m+\frac12)x}\right),
	\qquad x\to+\infty\,,
\end{equation}
for suitable real coefficients $c_k$\,.

\medskip\noindent
The simplest example in this class is obtained by taking $K=S^{d-1}$ equipped with its round metric and choosing
$
c(r)\equiv 1\,.
$
In that case, $g$ coincides with the Euclidean metric on the unit ball of $\mathbb R^d$, while
$
f(x)=e^{-x/2}\,.
$
More generally, when \(K=S^{d-1}\)\,, the metrics \eqref{Metricbis}
may be viewed as conformal radial deformations of the Euclidean ball,
the deformation being entirely encoded by the conformal factor \(c\).

\subsection{The Dirichlet--to--Neumann map and the Steklov spectrum}

We now introduce the Dirichlet--to--Neumann operator associated with
the warped product manifolds introduced above.

\medskip\noindent
Given
$
\psi\in H^{1/2}(K)\,,
$
we consider the boundary value problem
\begin{equation}\label{Dirichlet}
	\left\{
	\begin{aligned}
		-\Delta_g u &=0 \qquad &&\text{ on } M\,,\\
		u&=\psi &&\text{ on } \partial M\,.
	\end{aligned}
	\right.
\end{equation}

\medskip\noindent
The Dirichlet--to--Neumann map is first defined in the weak sense by the Green identity
\[
\langle \Lambda_g\psi,\varphi\rangle
=
\int_M
\langle du,dv\rangle_g\, dV_g\,,
\]
for every $\varphi\in H^{1/2}(K)$ and every function $v\in H^1(M)$ satisfying
\[
v|_{\partial M}=\varphi\,,
\]
where $u$ is the unique solution of \eqref{Dirichlet} with boundary value $\psi$. When the metric, the boundary data and the solution are sufficiently regular, this weak definition agrees with the classical expression
\begin{equation}\label{DN}
	\Lambda_g\psi
	=
	\partial_\nu u\big|_{\partial M}\,,
\end{equation}
where $\nu$ denotes the outward unit normal vector field along $\partial M$.
Indeed, in the regular case, the endpoint \(r=0\) corresponds to a
single point \(p\) in the smooth compact completion \(\check M\)\,, so
that
$
M=\check M\setminus\{p\}.
$
Since \(\dim\check M\ge3\)\,, the point \(\{p\}\) has zero
\(H^1\)-capacity. Hence
\begin{equation}
H^1(\check M)=H^1_0(\check M\setminus\{p\})\,,
\end{equation}
where \(H^1_0(\check M\setminus\{p\})\) denotes the closure of
\(C_c^\infty(\check M\setminus\{p\})\) in the \(H^1\)-norm. Therefore,
the integration by parts formula on \(M\) follows by density from the
same formula for compactly supported smooth functions on
\(\check M\setminus\{p\}\)\,. In particular, the removed point \(p\)
produces no additional boundary contribution.

\medskip\noindent
The operator $\Lambda_g$ is self-adjoint on
$L^2(\partial M,dS_g)$ and has compact resolvent.
Its spectrum, referred to as the \emph{Steklov spectrum} of $(M,g)$,
therefore consists of a discrete sequence of real eigenvalues,
counted with multiplicities,
\begin{equation}
	\label{eq:Steklov-spectrum}
	0=\sigma_0\le \sigma_1\le \sigma_2\le \cdots,
	\qquad
	\sigma_j\to+\infty\,.
\end{equation}
We denote by
\begin{equation}
	\label{eq:Steklov-distinct}
	0=\tau_0<\tau_1<\tau_2<\cdots
\end{equation}
the distinct Steklov eigenvalues, that is, the spectrum of
$\Lambda_g$ without multiplicities. We refer to \cite{GP} for a general introduction to Steklov spectral theory.


\subsection{Separation of variables}

We next rewrite the harmonic equation \eqref{Dirichlet} in the
cylindrical variable
\[
x=-\log r\in [0,+\infty)\,.
\]
Introducing
\[
v=f^{d-2}u\,,
\]
a direct computation shows that the equation $-\Delta_g u=0$ becomes
\begin{equation}\label{LaplaceEq}
	\Bigl(-\partial_x^2-\Delta_{g_K}+q_f(x)\Bigr)v
	=
	-\frac{(d-2)^2}{4}\,v\,,
\end{equation}
where the effective potential $q_f$ is defined by
\begin{equation}\label{qf}
	q_f(x)
	=
	\frac{(f^{d-2})''(x)}{f^{d-2}(x)}
	-\frac{(d-2)^2}{4}\,.
\end{equation}
In terms of the original radial variable $r=e^{-x}$, the potential can be written as
\begin{equation}\label{potradial}
	q_f(x)
	=
	(d-2)\left(
	(d-3)r^2
	\left(\frac{c'(r)}{c(r)}\right)^2
	+
	r^2\frac{c''(r)}{c(r)}
	+
	(d-1)r\frac{c'(r)}{c(r)}
	\right)\,.
\end{equation}
Moreover, the asymptotic expansion \eqref{f} implies
\begin{equation}\label{Asympqf}
	q_f(x)=O(e^{-x}),
	\qquad x\to+\infty\,.
\end{equation}

\medskip\noindent
The product structure of $(M,g)$ allows us to separate variables.
Let
\begin{equation}
	\label{eq:laplace-spectrum-multiplicity}
	-\Delta_{g_K}Y_j=\mu_jY_j\,,
	\qquad j\ge0,
\end{equation}
where $\{Y_j\}_{j\ge0}$ is an orthonormal basis of eigenfunctions of
$L^2(K,dV_{g_K})$\,. Recall that the distinct eigenvalues of $-\Delta_{g_K}$ are denoted by
\begin{equation}
	\label{eq:laplace-spectrum-distinct}
	0=\lambda_0<\lambda_1<\lambda_2<\cdots,
\end{equation}
while
\begin{equation}
	\label{eq:laplace-spectrum-full}
	0=\mu_0\le \mu_1\le \mu_2\le\cdots
\end{equation}
denotes the spectrum counted with multiplicities. Expanding
\begin{equation}\label{SepaV}
	v(x,\omega)
	=
	\sum_{j\ge0}v_j(x)Y_j(\omega),
\end{equation}
equation \eqref{LaplaceEq} reduces to the family of one-dimensional equations
\begin{equation}\label{ODE-x}
	-v_j''+q_f(x)v_j
	=
	-\kappa_j^2v_j\,,
	\qquad x\in(0,+\infty)\,,
\end{equation}
where
\begin{equation}
	\label{eq:kappa-j}
	\kappa_j
	=
	\sqrt{\mu_j+\frac{(d-2)^2}{4}}\,.
\end{equation}
This naturally leads to the Schr\"odinger operator
\begin{equation}\label{Hoperator}
	H
	=
	-\frac{d^2}{dx^2}+q_f(x)
\end{equation}
acting on the half-line, and to the associated spectral equation
\begin{equation}\label{SLz}
	-v''+q_f(x)v=zv\,,
	\qquad z\in\mathbb C\,.
\end{equation}

\medskip\noindent
Let $\{C_0(\cdot,z),S_0(\cdot,z)\}$ be the fundamental system of solutions of \eqref{SLz} normalized by
\begin{equation}\label{FSS}
	C_0(0,z)=1\,,
	\qquad
	C_0'(0,z)=0\,,
	\qquad
	S_0(0,z)=0\,,
	\qquad
	S_0'(0,z)=1\,.
\end{equation}
Their Wronskian is therefore given by
\begin{equation}\label{Wronskian}
	W(C_0,S_0)=1\,.
\end{equation}
Since $q_f\in L^1(0,+\infty)$ by \eqref{Asympqf}, the operator $H$ is in the limit-point case at infinity. Consequently, for every
$
z\in\mathbb C,
$
there exists, up to a multiplicative constant, a unique solution
$S_\infty(\cdot,z)$ of \eqref{SLz} belonging to $L^2$ in a neighborhood of $+\infty$, (see \cite{RS3}, Theorem XI.57 where our spectral parameter $z= k^2$). Writing 
\begin{equation}\label{WeylSolution}
	S_\infty(x,z)
	=
A(z)\bigl(C_0(x,z)+M(z)S_0(x,z)\bigr)\,,
\end{equation}
defines the Weyl--Titchmarsh function $M(z)$. Using \eqref{Wronskian}, we obtain

\begin{equation}\label{WT}
	M(z)
	= 
	\frac{S_\infty'(0,z)}
	{S_\infty(0,z)}\,.
\end{equation}

\medskip\noindent
Finally, the Dirichlet--to--Neumann map is diagonal with respect to the
Hilbert basis of harmonics \(\{Y_j\}_{j\ge0}\)\,. Thus, if
$
\psi=\sum_{j\ge0}\psi_jY_j\,,
$
then
\begin{equation}
	\label{GlobalDN}
	\Lambda_g\psi
	=
	\sum_{j\ge0}
	(\Lambda_g^{(j)}\psi_j)\,Y_j\,,
\end{equation}
where \(\Lambda_g^{(j)}\) is the scalar by which \(\Lambda_g\) acts on
\(\mathrm{span}\{Y_j\}\). The computation is identical to the one carried out in
\cite[Section~2]{DKN}. If
$
\mu_j=\lambda_k\,,
$
with \(\mu_j\) counted with multiplicities and \(\lambda_k\) denoting a
distinct eigenvalue, then
\begin{equation}
	\label{DNk}
	\Lambda_g^{(j)}
	=
	\frac{(d-2)f'(0)}{f^3(0)}
	-
	\frac{M(-\rho_k^2)}{f^2(0)}\,,
\end{equation}
where
\begin{equation}
	\label{eq:rho-k}
	\rho_k
	=
	\sqrt{\lambda_k+\frac{(d-2)^2}{4}},
	\qquad k\ge0\,.
\end{equation}

\medskip\noindent
Since the coefficient in \eqref{DNk} depends only on the corresponding
distinct eigenvalue \(\lambda_k\), the operator \(\Lambda_g\) acts by scalar
multiplication on each eigenspace
\begin{equation}
	\label{eq:eigenspace-k}
	E_{\lambda_k}
	=
	\ker(-\Delta_{g_K}-\lambda_k)\,.
\end{equation}
Hence
\begin{equation}
	\label{DN-projection}
	\Lambda_g
	=
	\sum_{k\ge0}
	\left(
	\frac{(d-2)f'(0)}{f^3(0)}
	-
	\frac{M(-\rho_k^2)}{f^2(0)}
	\right)
	P_k\,.
\end{equation}
Since the Weyl--Titchmarsh function \(M\) is strictly increasing, (see Lemma~2.3 in \cite{DKN}), the distinct Steklov
eigenvalues are given by
\begin{equation}
	\label{LinkSteklovWT1}
	\tau_k
	=
	\frac{(d-2)f'(0)}{f^3(0)}
	-
	\frac{M(-\rho_k^2)}{f^2(0)}\,,
	\qquad k\ge0\,.
\end{equation}


\subsection{A weighted Laplace estimate}

The following elementary estimate is the key analytic ingredient in the
proof of the Weyl--Titchmarsh estimate established in the next
subsection. It explains the particular choice of the weight
\(\Phi\) appearing in Theorem~\ref{applicationsymetrique}.

\mn 
Let
$
\theta:[T,\infty)\longrightarrow(0,\infty)
$
be a decreasing function such that
\[
\lim_{t\to\infty}\theta(t)=0\,.
\]
Define
\begin{equation}\label{Phi-I-def}
	\Phi(x)
	=
	\inf_{t\ge T}
	\bigl(2xt-t\theta(t)\bigr)\,,
	\qquad x>0\,,
\end{equation}
and
\begin{equation}
	I(\rho)
	=
	\int_0^\varepsilon e^{-2\rho x}e^{\Phi(x)}\,dx\,,
\end{equation}
where, without loss of generality, we assume that
\(0<\varepsilon\le1\). We have the following result:

\begin{lemma}\label{LaplaceEstimate}
	For every \(x>0\), the quantity \(\Phi(x)\) is finite. Moreover,
	\[
	I(\rho)
	\le
	e^{-\rho\,\theta(\rho)}\,,
	\qquad \rho\ge T\,.
	\]
\end{lemma}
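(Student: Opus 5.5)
The plan is to treat the two assertions separately. Both rest on the fact that \(\Phi\) is an infimum of the affine functions \(x\mapsto 2xt-t\theta(t)\), \(t\ge T\). No earlier result of the paper is needed.

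\emph{Finiteness of \(\Phi\).} Fix \(x>0\). The upper bound is immediate: taking \(t=T\) gives \(\Phi(x)\le 2xT-T\theta(T)<+\infty\). For the lower bound I use \(\theta(t)\to0\).
\begin{itemize}
\item Choose \(t_0\ge T\) such that \(\theta(t)\le x\) for all \(t\ge t_0\).
\item For \(t\ge t_0\) this gives \(2xt-t\theta(t)\ge xt\ge xT>0\).
\item For \(t\in[T,t_0]\), monotonicity gives \(\theta(t)\le\theta(T)\), hence \(2xt-t\theta(t)\ge -t\theta(t)\ge -t_0\,\theta(T)\).
\end{itemize}
So the infimum is bounded below by \(\min(xT,\,-t_0\theta(T))>-\infty\), and \(\Phi(x)\) is finite.

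As a by-product, \(\Phi\) is a finite infimum of affine functions of \(x\), hence concave on \((0,\infty)\). It is therefore continuous, and the integrand of \(I(\rho)\) is measurable, so \(I(\rho)\) is well defined. It is also finite, since \(\Phi\le 2xT-T\theta(T)\) is bounded above on \((0,\varepsilon]\).

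\emph{The Laplace estimate.} Fix \(\rho\ge T\) and use \(t=\rho\) as a test value in the infimum. This gives the pointwise bound \(\Phi(x)\le 2x\rho-\rho\theta(\rho)\) for every \(x>0\). Equivalently,
\[
e^{-2\rho x}e^{\Phi(x)}\le e^{-\rho\theta(\rho)},\qquad 0<x\le\varepsilon .
\]
Integrating over \((0,\varepsilon]\) yields \(I(\rho)\le \varepsilon\,e^{-\rho\theta(\rho)}\le e^{-\rho\theta(\rho)}\), because \(\varepsilon\le1\). The bound is uniform in \(x\), so no finer Laplace-method analysis is required.

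There is no genuine obstacle. The only points needing care are the lower bound for \(\Phi\), which is where \(\theta(t)\to0\) and the monotonicity of \(\theta\) enter, and the measurability of the integrand, which follows from the concavity of \(\Phi\). The divergence condition \(\int_T^\infty \theta(t)\,dt/t=+\infty\) plays no role here. It is used only later, in Lemma~\ref{explosion} and in the application of Proposition~\ref{quasi}.
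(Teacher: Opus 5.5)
Your proof is correct and follows the paper's argument. Finiteness comes from \(\theta(t)\to0\), which gives \(2xt-t\theta(t)\ge xt\) for large \(t\); the Laplace bound comes from testing the infimum at \(t=\rho\) and integrating over \((0,\varepsilon)\) with \(\varepsilon\le1\). Your explicit lower bound on the compact range \([T,t_0]\) via monotonicity of \(\theta\), and your remark that \(\Phi\) is concave and hence measurable, spell out points the paper leaves implicit.
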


\begin{proof}
	Fix \(x>0\)\,. Since \(\theta(t)\to0\) as \(t\to+\infty\)\,, there exists
	\(T_x\ge T\) such that
	\[
	\theta(t)\le x\,,
	\qquad t\ge T_x\,.
	\]
	Hence
	\[
	2xt-t\theta(t)
	=
	t(2x-\theta(t))
	\ge xt,
	\qquad t\ge T_x\,.
	\]
	It follows that the infimum defining \(\Phi(x)\) is finite. Next, by definition of \(\Phi\)\,,
	\[
	\Phi(x)
	\le
	2\rho x-\rho\theta(\rho),
	\qquad \rho\ge T\,.
	\]
	Therefore
	\[
	e^{-2\rho x}e^{\Phi(x)}
	\le
	e^{-\rho\,\theta(\rho)}\,.
	\]
Integrating over \((0,\varepsilon)\) yields the desired estimate.
\end{proof}

\subsection{A Weyl--Titchmarsh estimate}

We now recall several estimates proved in \cite{DKN} that will be used in the
proof of our quasi-analytic propagation result. The key point is that the
difference of the Weyl--Titchmarsh functions can be represented as a Laplace
transform involving the difference of the effective potentials $q_{\widetilde f}$ and $q_f$\,.

\medskip\noindent
According to Lemma~4.4 of \cite{DKN} (with $p=1$), there exists a kernel
\[
K_1:\{(x,t)\,;\,0\le t\le x\}\longrightarrow \mathbb R
\]
such that, for every \(0<\alpha<1\)\,, there exists a constant
\(C_{\alpha}>0\) satisfying
\begin{equation}\label{K1-estimate}
	\left|
	\partial_x^k\partial_t^\ell K_1(x,t)
	\right|
	\le
	C_{\alpha}\,e^{-\alpha x}\,,
	\qquad
	0\le t\le x\,,
	\qquad
	k,\ell\le m-1\,.
\end{equation}
We then define the Volterra operator $B$ by
\begin{equation}\label{Volterra-operator}
h\mapsto	Bh(x)
	=
	h(x)
	+
	\int_0^x K_1(x,t)h(t)\,dt\,.
\end{equation}
Lemma~4.5 of \cite{DKN} asserts that, for all sufficiently large \(\rho\),
\begin{equation}\label{WT-B}
	S_\infty(0,-\rho^2)\widetilde S_\infty(0,-\rho^2)
	\bigl(
	M(-\rho^2)-\widetilde M(-\rho^2)
	\bigr)
	=
	\int_0^\infty
	e^{-2\rho x}
	B[q_{\widetilde f}-q_f](x)\,dx\,.
\end{equation}
Moreover, Corollary~4.3 of \cite{DKN} yields
\begin{equation}\label{WT-origin}
	S_\infty(0,-\rho^2)
	=
	1+O(\rho^{-1})\,,
	\qquad
	\widetilde S_\infty(0,-\rho^2)
	=
	1+O(\rho^{-1})\,,
	\qquad
	\rho\to+\infty\, .
\end{equation}
Combining \eqref{WT-B} and \eqref{WT-origin}, we obtain the
 existence of positive constants \(C\) and \(\rho_0\) such that
\begin{equation}\label{WT-B-estimate}
	\left|
	M(-\rho^2)-\widetilde M(-\rho^2)
	\right|
	\le
	C
	\left|
	\int_0^\infty
	e^{-2\rho x}
	B[q_{\widetilde f}-q_f](x)\,dx
	\right|\,,
	\mbox{ for } 
	\rho\ge \rho_0\,.
\end{equation}

\medskip\noindent
We now estimate the right-hand side of \eqref{WT-B-estimate}. We first deal
with the contribution of the interval \((\epsilon,\infty)\). Recall that
\begin{equation}\label{AsympQf}
	q_f(x)=O(e^{-x})\,,
	\qquad x\to+\infty .
\end{equation}
Since \(q_f\) and \(q_{\widetilde f}\) belong to \(C^{m-2}([0,\infty))\) and satisfy
\eqref{AsympQf}, we have
\begin{equation}\label{Hdelta}
	q_{\widetilde f}-q_f\in H_\delta\,,
	\qquad
	0<\delta<2\,,
	\qquad
	H_\delta
	=
	\Bigl\{
	q\,;\,
	\int_0^\infty |q(x)|^2e^{\delta x}\,dx<\infty
	\Bigr\}\,.
\end{equation}
Moreover, by Proposition~4.6 of \cite{DKN}, for every
\(0<\delta<1\)\,, the Volterra operator \(B\) is an isomorphism
$
B:H_\delta\longrightarrow H_\delta\,.
$
Consequently, for $0<\delta <1$,
$
B[q_{\widetilde f}-q_f]\in H_\delta \,.
$
Therefore, by the Cauchy--Schwarz inequality,
\begin{equation}\label{tail-cs}
	\left|
	\int_\varepsilon^\infty
	e^{-2\rho x}
	B[q_{\widetilde f}-q_f](x)\,dx
	\right|
	\le
	\left(
	\int_\varepsilon^\infty
	e^{-(4\rho+\delta)x}\,dx
	\right)^{1/2}
	\left\|
	B[q_{\widetilde f}-q_f]
	\right\|_{H_\delta}\,.
\end{equation}
Thus,
\begin{equation}\label{tail-estimate}
	\left|
	\int_\varepsilon^\infty
	e^{-2\rho x}
	B[q_{\widetilde f}-q_f](x)\,dx
	\right|
	\le
	C e^{-2\rho\varepsilon},
	\qquad \rho>0 \,.
\end{equation}

\medskip\noindent
We now estimate the contribution of the interval \((0,\varepsilon)\) in the right-hand
side of \eqref{WT-B-estimate}. To this end, we assume that
\begin{equation}\label{quasi-analytic-closeness-f}
	\left|
	\partial_x^j(f-\widetilde f)(x)
	\right|
	\le
	C e^{\Phi(x)}\,,
	\qquad
	0<x<\varepsilon\,,
	\qquad
	j=0,1,2.
\end{equation}
Consequently, since \(q_f\) depends smoothly on \(f\), \(f'\), and \(f''\),
and since \(f\) and \(\widetilde f\) are bounded away from zero, there exists
\(C>0\) such that
\begin{equation}\label{quasi-analytic-closeness-q}
	|q_{\widetilde f}(x)-q_f(x)|
	\le
	C e^{\Phi(x)}\,,
	\qquad
	0<x<\epsilon\,.
\end{equation}
Moreover, by \eqref{K1-estimate}, the kernel \(K_1\) is bounded on
\(\{0\le t\le x\le\varepsilon\}\). Hence, using \eqref{Volterra-operator},
\[
|B[q_{\widetilde f}-q_f](x)|
\le
|q_{\widetilde f}(x)-q_f(x)|
+
C\int_0^x
|q_{\widetilde f}(t)-q_f(t)|\,dt\,.
\]
Since \(\Phi\) is increasing, \eqref{quasi-analytic-closeness-q} yields
\[
|q_{\widetilde f}(t)-q_f(t)|
\le
C e^{\Phi(x)}\,,
\qquad 0<t<x\,,
\]
and therefore
\begin{equation}\label{Bq-local-bound}
	|B[q_{\widetilde f}-q_f](x)|
	\le
	C e^{\Phi(x)}\,,
	\qquad 0<x<\varepsilon\,.
\end{equation}
It follows that
\[
\left|
\int_0^\varepsilon
e^{-2\rho x}
B[q_{\widetilde f}-q_f](x)\,dx
\right|
\le
C
\int_0^\varepsilon e^{-2\rho x}e^{\Phi(x)}\,dx\,.
\]
Using Lemma~\ref{LaplaceEstimate}, we obtain
\begin{equation}\label{local-laplace-estimate}
	\left|
	\int_0^\varepsilon
	e^{-2\rho x}
	B[q_{\widetilde f}-q_f](x)\,dx
	\right|
	\le
	C e^{-\rho\,\theta(\rho)}\,,
	\qquad
	\rho\ge T\,.
\end{equation}
Combining \eqref{WT-B-estimate}, \eqref{tail-estimate} and
\eqref{local-laplace-estimate}, we obtain
\[
\left|
M(-\rho^2)-\widetilde M(-\rho^2)
\right|
\le
C\bigl(e^{-\rho\,\theta(\rho)}+e^{-2\rho\varepsilon}\bigr)\,.
\]
Since \(\theta(\rho)\to0\) as \(\rho\to+\infty\), it follows that
\(e^{-2\rho\varepsilon}=O(e^{-\rho\,\theta(\rho)})\). Hence there exist positive constants $C$ and $\rho_0$ such that
\begin{equation}\label{WT-final-decay}
	\left|
	M(-\rho^2)-\widetilde M(-\rho^2)
	\right|
	\le
	C e^{-\rho\,\theta(\rho)}\,,
	\mbox{ for }	\rho\geq \rho_0\,.
\end{equation}

\subsection{Proof of Theorem~\ref{applicationsymetrique}}

We now prove the geometric application announced in
Theorem~\ref{applicationsymetrique}. By
Theorem~\ref{GTthm}, it is enough to establish the required
Ingham-type spectral decay estimate for the difference of the
Dirichlet--to--Neumann maps. We first prove an elementary property of
the weight function \(\Phi\).

\begin{lemma}\label{explosion}
	Let
	\[
	\Phi(x)=\inf_{t\ge T}(2xt-t\theta(t))\,,
	\]
	where \(\theta:[T,+\infty)\to(0,+\infty)\) is a positive decreasing function satisfying
	\[
	\int_T^\infty \frac{\theta(t)}{t}\,dt=+\infty\,.
	\]
	Then
	\[
	\Phi(x)\to-\infty \text{ as } x\to0^+\,.
	\]
\end{lemma}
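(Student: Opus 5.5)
The plan is to bound \(\Phi\) from above by a single well-chosen term of the infimum. By definition, for every \(x>0\) and every \(t\ge T\),
\[
\Phi(x)\le 2xt-t\theta(t)\,.
\]
So it suffices to find, for each \(N>0\), some \(t_N\ge T\) with \(t_N\theta(t_N)\ge 2N\). Indeed, for \(0<x\le N/(2t_N)\) we then get
\[
\Phi(x)\le 2x t_N-2N\le N-2N=-N\,,
\]
which is exactly \(\Phi(x)\to-\infty\) as \(x\to0^+\). Since each map \(x\mapsto 2xt-t\theta(t)\) is increasing, \(\Phi\) is nondecreasing. This monotonicity is not needed for the argument, but it makes clear that the bound holds on the whole interval \((0,N/(2t_N)]\).

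The only real step is therefore to show that \(t\mapsto t\theta(t)\) is unbounded on \([T,\infty)\). I would argue by contradiction. Suppose \(t\theta(t)\le B\) for all \(t\ge T\). Then \(\theta(t)/t\le B/t^2\), and hence
\[
\int_T^\infty\frac{\theta(t)}{t}\,dt\le \int_T^\infty \frac{B}{t^2}\,dt=\frac{B}{T}<\infty\,,
\]
which contradicts the divergence hypothesis. So \(\sup_{t\ge T}t\theta(t)=+\infty\), and the required \(t_N\) exists for every \(N\).

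I do not expect a genuine obstacle here. The one point to be careful about is to use only unboundedness of \(t\theta(t)\), not \(t\theta(t)\to\infty\). Unboundedness is what the divergence of \(\int_T^\infty \theta(t)/t\,dt\) gives directly, and it is all the argument needs. Positivity of \(\theta\) is used implicitly to ensure \(\theta(t)/t\ge 0\), so that the comparison of integrals is legitimate.
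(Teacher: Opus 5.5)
Your proof is correct and follows essentially the same route as the paper: you show that $t\theta(t)$ is unbounded by comparing $\int_T^\infty\theta(t)/t\,dt$ with $\int_T^\infty B/t^2\,dt$, and then bound $\Phi$ above by the single term at a point $t_N$ where $t_N\theta(t_N)\ge 2N$. The only cosmetic difference is the smallness condition on $x$: you take $x\le N/(2t_N)$, while the paper takes $2x\le\theta(t_A)/2$ to obtain $\Phi(x)\le-\tfrac12 t_A\theta(t_A)\le -A$.
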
 

\begin{proof}
	Let \(A>0\). We first observe that
	\[
	\sup_{t\ge T} t\theta(t)=+\infty\,.
	\]
	Indeed, otherwise there would exist \(C>0\) such that
	\[
	t\theta(t)\le C,
	\qquad t\ge T,
	\]
	and therefore
	\[
	\int_T^\infty \frac{\theta(t)}{t}\,dt
	\le
	C\int_T^\infty \frac{dt}{t^2}
	<+\infty\,,
	\]
	which contradicts the assumption. Hence we may choose \(t_A\ge T\) such that
	\[
	t_A\theta(t_A)\ge 2A.
	\]
	For \(x>0\) small enough so that
	\[
	2x\le \frac{\theta(t_A)}{2}\,,
	\]
	we get
	\[
	\Phi(x)
	\le
	2xt_A-t_A\theta(t_A)
	\le
	-\frac12 t_A\theta(t_A)
	\le
	-A\,.
	\]
	Since \(A>0\) is arbitrary, this proves that
	$
	\Phi(x)\to-\infty
	\qquad\text{as }x\to0^+\,.
	$
\end{proof}

\medskip\noindent
Now, assume that
\begin{equation}\label{quasi-analytic-final}
	\left|
	\partial_x^j(f-\widetilde f)(x)
	\right|
	\le
	C e^{\Phi(x)}\,,
	\qquad
	0<x<\varepsilon,
	\qquad
	j=0,1,2.
\end{equation}
By the previous lemma,
\[
e^{\Phi(x)}\longrightarrow0
\qquad\text{as }x\to0^+\,.
\]
Hence \eqref{quasi-analytic-final} implies
\begin{equation}\label{boundary-jets-final}
	f^{(j)}(0)=\widetilde f^{(j)}(0)\,,
	\qquad j=0,1,2.
\end{equation}

\mn
Let
\begin{equation}
	\label{eq:F-definition}
	F=(\Lambda_g-\Lambda_{\widetilde g})\psi\,,
	\qquad
	\psi\in C_c^\infty(\mathcal O)\,.
\end{equation}
If the local Dirichlet--to--Neumann maps coincide on \(\mathcal O\)\,,
we have
$
F=0$ on $\mathcal O$.
Combining the diagonalization formula for the Dirichlet--to--Neumann
map with the boundary identities \eqref{boundary-jets-final}, we obtain
\begin{equation}
	\label{eq:PkF-WT}
	P_kF
	=
	-\frac1{f^2(0)}
	\Bigl(
	M(-\rho_k^2)-\widetilde M(-\rho_k^2)
	\Bigr)
	P_k\psi\,.
\end{equation}
where
\[
\rho_k
=
\sqrt{\lambda_k+\frac{(d-2)^2}{4}}\,.
\]
Hence,
\begin{equation}
	\label{eq:PkF-L2}
	\|P_kF\|_{L^2(K)}
	=
	\frac1{f^2(0)}
	\bigl|
	M(-\rho_k^2)-\widetilde M(-\rho_k^2)
	\bigr|
	\,\|P_k\psi\|_{L^2(K)}\,.
\end{equation}
Using the Weyl--Titchmarsh estimate \eqref{WT-final-decay}, we deduce
\begin{equation}
\|P_kF\|_{L^2(K)}
\le
C
e^{-\rho_k\theta(\rho_k)}
\|P_k\psi\|_{L^2(K)} \le C
e^{-\rho_k\theta(\rho_k)}
\|\psi\|_{L^2(K)}\,.
\end{equation}
As
\(
\rho_k\sim\sqrt{\lambda_k},
\)
possibly replacing \(\theta\) by a smaller function with the same
properties, we obtain
\begin{equation}
\|P_kF\|_{L^2(K)}
\le
C
e^{-\sqrt{\lambda_k}\theta(\sqrt{\lambda_k})}\,.
\end{equation}
Therefore \(F\) satisfies the assumptions of
Proposition \ref{quasi}. Since \(F=0\) on \(\mathcal O\)\,, we conclude that
\(F\equiv0\) on \(K\)\,.

\mn
The remainder of the argument is identical to the final
self-adjointness argument in the proof of
Theorem~\ref{expthm}. Since \(f(0)=\widetilde f(0)\), the operator
$
A=\Lambda_g-\Lambda_{\widetilde g}
$
is self-adjoint on \(L^2(K,dS_K)\). Arguing exactly as in the proof of
Theorem~\ref{expthm}, we conclude that
\[
Au=0\,,
\qquad
u\in C^\infty(K)\,.
\]
Therefore,
$
\Lambda_g=\Lambda_{\widetilde g}\,,
$
which completes the proof.

\subsection{Proof of Theorem~\ref{applicationquasianalytic}}

The proof is identical to that of
Theorem~\ref{applicationsymetrique}, replacing
Theorem~\ref{GTthm} by Theorem~\ref{BPthm}.


\section*{Acknowledgements}
This work has received funding from the European Research Council (ERC) under the European Union's Horizon 2020 research and innovation programme through the grant agreement 862342 (A.E.). A.E. is partially supported by the grants CEX2023-001347-S, RED2022-134301-T, and PID2022-136795NB-I00 funded by the Spanish Ministry of Science and Innovation. F.N. thanks the French GDR Dynqua for his support. N.K. is supported by NSERC grant RGPIN 105490-2025.


\end{document}